\newtheorem{theorem}{Theorem}
\newtheorem{lemma}{Lemma}
\newtheorem{corollary}{Corollary}
\newtheorem{algorithm}{Algorithm}
\newcommand{\satop}[2]{\stackrel{\scriptstyle{#1}}{\scriptstyle{#2}}}
\begin{document}

\title{Random weights, robust lattice rules and the geometry of the cbc$r$c algorithm\footnote{MCS2010: 65D30, 65D32; Keywords: Quasi-Monte Carlo, lattice rule, component-by-component, cbc, weighted space} }

\author{Josef Dick\footnote{School of Mathematics and Statistics, The University of New South Wales, Sydney, NSW 2052, Australia; email josef.dick@unsw.edu.au; Supported by a Queen Elizabeth 2 Fellowship from the Australian Research Council. }}

\date{\today}
\maketitle

\begin{abstract}
In this paper we study lattice rules which are cubature formulae to approximate integrands over the unit cube $[0,1]^s$ from a weighted reproducing kernel Hilbert space. We assume that the weights are independent random variables with a given mean and variance for two reasons stemming from practical applications: (i) It is usually not known in practice how to choose the weights. Thus by assuming that the weights are random variables, we obtain robust  constructions (with respect to the weights) of lattice rules. This, to some extend, removes the necessity to carefully choose the weights. (ii) In practice it is convenient to use the same lattice rule for many different integrands. The best choice of weights for each integrand may vary to some degree, hence considering the weights random variables does justice to how lattice rules are used in applications.

In this paper the worst-case error is therefore a random variable depending on random weights. We show how one can construct lattice rules which perform well for weights taken from a set with large measure. Such lattice rules are therefore robust with respect to certain changes in the weights. The construction algorithm uses the component-by-component (cbc) idea based on two criteria, one using the mean of the worst case error and the second criterion using a bound on the variance of the worst-case error. We call the new algorithm the cbc$2$c (component-by-component with 2 constraints) algorithm.

We also study a generalized version which uses $r$ constraints which we call the cbc$r$c (component-by-component with $r$ constraints) algorithm. We show that lattice rules generated by the cbc$r$c algorithm simultaneously work well for all weights in a subspace spanned by the chosen weights $\boldsymbol{\gamma}^{(1)},\ldots, \boldsymbol{\gamma}^{(r)}$. Thus, in applications, instead of finding one set of weights, it is enough to find an $r$ dimensional convex polytope in which the optimal weights lie. The price for this method is a factor $r$ in the upper bound on the error and in the construction cost of the lattice rule. Thus the burden of determining one set of weights very precisely can be shifted to the construction of good lattice rules.

Numerical results indicate the benefit of using the cbc$2$c algorithm for certain choices of weights.
\end{abstract}

\section{Introduction}

In this paper we study the integration error using a lattice rule. A lattice rules is a quadrature rule of the form
\begin{equation*}
Q_N(f) = \frac{1}{N} \sum_{n=0}^{N-1} f\left(\left\{\frac{n \boldsymbol{g}}{N} \right\}\right),
\end{equation*}
where $N > 1$ and $s$ are natural numbers, $\boldsymbol{g} \in \{1,\ldots, N-1\}^s$, and where $\{\boldsymbol{x}\} = (\{x_1\},\ldots, \{x_s\})$ for $\boldsymbol{x} = (x_1,\ldots, x_s)$ and $\{x\} = x-\lfloor x \rfloor$ stands for the fractional part of a nonnegative real number $x$. Lattice rules are quasi-Monte Carlo algorithms which are useful to approximate integrals $\int_{[0,1]^s} f(\boldsymbol{x}) \,\mathrm{d} \boldsymbol{x}$. In the following we present a survey of the literature and some background on lattice rules.

\subsection{Literature survey and background on lattice rules}

It has been shown that lattice rules are efficient for approximating integrals of periodic functions (see \cite{H98a,K63,N92,SJ94}). The construction of lattice rules has seen many advances in recent years. An important framework in which to study lattice rules (and other quadrature rules) are reproducing kernel Hilbert spaces, which have first been considered in \cite{H98b} and are now a standard tool in quasi-Monte Carlo integration. The component-by-component (cbc) construction (where the generating vector $\boldsymbol{g}$ is constructed one component at a time) was first discovered by Korobov~\cite{K63} and \cite[Theorem~18, p. 120]{K59} and independently rediscovered by Sloan and Reztsov \cite{SR02}. In \cite{SKJ02, SKJ03} this idea has been further developed to allow one to use lattice rules also for nonperiodic integrands. Optimal convergence rates for lattice rules constructed this way have been shown in \cite{K59,K63} and independently in \cite{K03} for a prime number of points $N$ and in \cite{D04} for a nonprime number of points. A breakthrough in reducing the construction cost of the cbc construction has been achieved by Nuyens and Cools in \cite{NC06,NC06b}, who showed how the fast Fourier transform can be used to reduce the construction cost of the search algorithm. A further very important development has been the introduction of weighted function spaces by Sloan and Wo\'zniakowski~\cite{SW98}. Therein, the authors make the important observation that integrands may have different dependence on different projections. To take this fact into account, the authors introduced so-called weighted function spaces which yields a weighted worst-case error criterion. A comprehensive introduction to weighted function spaces and tractability questions as well as further background can be found in the comprehensive monographs \cite{NW08,NW10}.

\subsection{Worst-case error}

In the following we introduce a specific reproducing kernel which will be sufficient to illustrate our algorithm. In order to keep the notation as simple as possible, we do not consider the most general case possible.

Consider the reproducing kernel (see \cite{A50}) $K:[0,1]^2 \to \mathbb{C}$ defined by
\begin{equation*}
K_\gamma(x,y) = \gamma B_2(\{x-y\}),
\end{equation*}
where $\gamma \ge 0$ is a nonnegative real number, the 'weight', and $B_2(z) = z^2-z+1/6$ is the Bernoulli polynomial of degree two (cf. \cite{H98b}). The Bernoulli polynomial $B_2$ has the Fourier series
\begin{equation*}
B_2(w) = \frac{1}{2\pi^2} \sum_{k \in \mathbb{Z} \setminus \{0\}} k^{-2} \mathrm{e}^{2\pi \mathrm{i} k w}.
\end{equation*}
The reproducing kernel $K_\gamma$ defines a reproducing kernel Hilbert space $\mathcal{H}_\gamma$ of absolutely continuous, periodic functions on $[0,1]$ which integrate to $0$, with inner product
\begin{equation*}
\langle f, g \rangle = \frac{2\pi^2}{\gamma} \sum_{k \in \mathbb{Z} \setminus \{0\}} k^2 \, \widehat{f}(k) \overline{\widehat{g}(k)}.
\end{equation*}
For dimensions $s > 1$ we consider the reproducing kernel
\begin{equation*}
K_{\boldsymbol{\gamma}}(\boldsymbol{x},\boldsymbol{y}) = 1+ \sum_{\emptyset \neq u \subseteq \mathcal{S}} \gamma_u \prod_{i\in u} B_2(\{x_i-y_i\}),
\end{equation*}
where $\mathcal{S} = \{1,\ldots, s\}$, $\boldsymbol{\gamma} = (\gamma_u)_{\emptyset \neq u \subseteq \mathcal{S}}$ is a set of nonnegative real numbers $\gamma_u$ associated with the projection onto the coordinates in $u$ (we refer to these numbers as the 'weights' \cite{SW98}), $\boldsymbol{x} = (x_1,\ldots, x_s)$ and $\boldsymbol{y}=(y_1,\ldots, y_s)$. The associated reproducing kernel Hilbert space is denoted by $\mathcal{H}_{\boldsymbol{\gamma}}$ which is a sum of tensor products of the reproducing kernel Hilbert space with kernel $K_\gamma$ and the space of constant functions, see \cite{H98a, H98b, SW98} for more information. The kernel $K_{\boldsymbol{\gamma}}$ can also be interpreted as the shift-invariant kernel of a reproducing kernel Hilbert space of non-periodic functions \cite{HW00}. Thus the results here can also be interpreted for randomly shifted lattice rules in the associated non-periodic reproducing kernel Hilbert space (as for instance in\cite{K03,NC06,NC06b} and many other papers).

The integration error using a lattice rule with generating vector $\boldsymbol{g} \in \{1,\ldots, N-1\}^s$ is defined as
\begin{equation*}
e(\mathcal{H}_{\boldsymbol{\gamma}}, P_N(\boldsymbol{g})) = \sup_{\satop{f \in \mathcal{H}_{\boldsymbol{\gamma}}, P_N(\boldsymbol{g})}{\|f\|_{\mathcal{H}_{\boldsymbol{\gamma}}} \le 1}} \left|\int_{[0,1]^s} f(\boldsymbol{x})\,\mathrm{d} \boldsymbol{x} - \frac{1}{N} \sum_{n=0}^{N-1} f\left(\left\{\frac{n \boldsymbol{g}}{N} \right\}\right)\right|.
\end{equation*}
It was shown in \cite{H98a,H98b} that
\begin{align*}
e^2(\mathcal{H}_{\boldsymbol{\gamma}}, P_N(\boldsymbol{g})) & = \int_{[0,1]^s} \int_{[0,1]^s} K_{\boldsymbol{\gamma}}(\boldsymbol{x}, \boldsymbol{y}) \,\mathrm{d} \boldsymbol{x} \,\mathrm{d} \boldsymbol{y} \\ & - \frac{2}{N} \sum_{n=0}^{N-1} \int_{[0,1]^s} K_{\boldsymbol{\gamma}}(\boldsymbol{x}, \{n \boldsymbol{x}/N\}) \,\mathrm{d} \boldsymbol{x} + \frac{1}{N^2} \sum_{n,n'=0}^{N-1} K_{\boldsymbol{\gamma}}(\{n \boldsymbol{g}/N\}, \{n' \boldsymbol{g}/N\}) \\ & = \sum_{\emptyset \neq u \subseteq \mathcal{S}} \gamma_u \frac{1}{N} \sum_{n=0}^{N-1} \prod_{i\in u} B_2(\{ n g_i/N\}),
\end{align*}
where the last inequality follows from $\int_0^1 K_\gamma(x,y) \,\mathrm{d} x = 1$ and the fact that (see \cite{H98a,H98b}) $$\frac{1}{N^2} \sum_{n,n'=0}^{N-1} K_{\boldsymbol{\gamma}}(\{n \boldsymbol{g}/N\}, \{n' \boldsymbol{g}/N\}) = \frac{1}{N} \sum_{n=0}^{N-1} K_{\boldsymbol{\gamma}}(\{n \boldsymbol{g}/N\}, \boldsymbol{0}).$$

We use the last expression as error criterion in this paper, i.e.
\begin{equation}\label{eq_wce}
e^2(\mathcal{H}_{\boldsymbol{\gamma}},P_N(\boldsymbol{g})) = \sum_{\emptyset \neq u \subseteq\mathcal{S}} \left(\gamma_u \frac{1}{N} \sum_{n=0}^{N-1} \prod_{i \in u} B_2(\{n g_i/N\}) \right).
\end{equation}
As indicated above, \eqref{eq_wce} can be interpreted as the square worst-case error in a Korobov space or the mean square worst-case error of a randomly shifted lattice rules in a Sobolev space. The square worst-case error $e^2(\mathcal{H},P_N(\boldsymbol{g}))$ is commonly used as error-criterion in a cbc construction \cite{D04,K03,NC06,NC06b,SKJ02,SKJ03}.

The cbc algorithm is a greedy search algorithm to find a good generating vector $\boldsymbol{g}^\ast = (g_1^\ast,\ldots, g_s^\ast) \in \{1,\ldots, N-1\}$. This algorithm works the following way. First one chooses a number of points $N$, the dimension $s$ and some weights $\boldsymbol{\gamma}$. The component-by-component construction then finds a generating vector $\boldsymbol{g}^\ast = (g_1^\ast, \ldots, g_s^\ast)$ in the following way:
\begin{itemize}
\item Set $g_1^\ast = 1$.
\item For $j=1,\ldots, s$ set $$g_j^\ast = \mathrm{argmin}_{1 \le z \le N-1} e^2(\mathcal{H}_{\boldsymbol{\gamma}}, P_N((g_1^\ast,\ldots, g_{j-1}^\ast, z))).$$
\end{itemize}

\subsection{The aim of the paper}

In practice, it is usually not known how to choose the weights $\gamma_u$, $\emptyset \neq u \subseteq \mathcal{S}$ in the worst-case error criterion. Some suggestions on how to choose the weights in financial applications have been put forward for instance in \cite{W07, WS06}. Another method of choosing the weights is by choosing them such that the error bound is minimized \cite{LLS03}. However, choosing good weights remains a particular challenge for the application of lattice rules.

In this paper we assume that the weights are independent random variables with a given mean and variance for two reasons stemming from practical applications:
\begin{itemize}
\item It is usually not known in practice how to choose the weights precisely. By assuming randomness in the weights permits a 'measurement error' or noise in choosing the weights.
\item It is convenient to use the same lattice rule for many different integrands. The best choice of weights for each integrand may vary to some degree, hence considering the weights random variables seems to be the right model in this case.
\end{itemize}

Indeed, it is desirable to have quadrature rules (lattice rules) which are robust with respect to the weights $(\gamma_u)$, that is, for which one obtains a good convergence behavior, not only for one given choice of weights, but for a whole range of weights. In order to construct lattice rules which have this property, we assume that the weights are not given (or fixed), but rather, we assume they are chosen randomly with a given mean and variance. In this way, the square worst-case error $e^2(\mathcal{H}_{\boldsymbol{\gamma}}, P_N(\boldsymbol{g}))$ is a random variable (with respect to the weights $\boldsymbol{\gamma}$). In the following we propose an algorithm to construct lattice rules which have a small expectation value and, at the same time, a small variance of $e^2(\mathcal{H}_{\boldsymbol{\gamma}}, P_N(\boldsymbol{g}))$ with respect to the random choices $\boldsymbol{\gamma}$. In a nutshell, the existence of such a lattice rule is guaranteed by the fact that more than half of the generating vectors have small expectation value of the worst-case error and more than half of the generating vectors have small variance of the worst-case error. Thus there exists at least one vector for which both, the expectation value and the variance, are small. This yields a component-by-component algorithm with $2$ constrains, which we call the cbc$2$c algorithm.

We also study a general version which uses $r$ constraints which we call the cbc$r$c (component-by-component (with) $r$ constraints) algorithm. We show that lattice rules generated by the cbc$r$c algorithm simultaneously work well for all weights in a subspace spanned by the chosen weights $\boldsymbol{\gamma}^{(1)},\ldots, \boldsymbol{\gamma}^{(r)}$. Thus, in applications, instead of finding one set of weights, it is enough to find an $r$ dimensional convex polytope in which the optimal weights lie. The price for this method is a factor $r$ in the upper bound on the error and the construction cost of the lattice rule. Thus the burden of finding one set of weights can be shifted to the construction of good lattice rules.

Theoretically one could make an exhaustive search to obtain a lattice rule which simultaneously works well for all choices of weights. This may eventually shift the question of how to choose the weights for a particular problem to the problem of finding a universal lattice rule which simultaneously works well for all choices of weights, thereby removing the need to choose weights in the first place. The computational challenge though is, that for higher dimensions finding such a lattice rule is currently intractable (since the cost depends exponentially on the dimension). Further, also the upper bound from this paper depends exponentially on the dimension when $r = 2^s-1$. This method may be useful though for integrands with low truncation dimension $d$ by choosing $r = 2^d-1$ in this case.

We note that a similar theory can be applied to polynomial lattice rules and related point sets \cite{DP10,N92}.

In the next section we study the implications of the assumption that the weights are random on the square worst-case error. In Section~\ref{sec3} we first repeat some important insights from \cite{NC06,NC06b}. We introduce the cbc$2$ algorithm and show that the constructed lattice rules work well for weights taken from a set of large measure. We then consider the cbc$r$ algorithm and consider the geometrical interpretation of the algorithm. It is shown that the square worst case error satisfies a certain bound for all weights in an $r$-dimensional convex polytope which is defined by the weights used in the cbc$r$c algorithm. In particular we explain how the weights in the cbc$r$c algorithm determine the shape of the search space of the generating vectors. In Section~\ref{sec_num} we provide some numerical examples to illustrate that in certain instances the cbc$2$c algorithm is beneficial.

\section{The expectation value and standard deviation of the square worst-case error with random weights}

We assume that the weights $\gamma_u$, $\emptyset \neq u \subseteq \mathcal{S}$, are nonnegative, independent random variables with a given mean and variance. Let $\mathbb{E}$ denote the expectation value and $\mathrm{Var}$ the variance. For any $\emptyset \neq u \subseteq \mathcal{S}$ there are numbers $\overline{\gamma}_u, \sigma_u \ge 0$ such that for all $\emptyset \neq u, u' \subseteq \mathcal{S}$ the following properties hold:
\begin{itemize}
\item $\gamma_u \ge 0$,
\item $\mathbb{E}(\gamma_u) = \overline{\gamma}_u$,
\item $\mathbb{E}(\gamma_u \gamma_{u'}) = \mathbb{E}(\gamma_u) \mathbb{E}(\gamma_{u'})$ for $u \neq u'$,
\item $\mathrm{Var}(\gamma_u) = \sigma_u^2$;
\end{itemize}
Note that we have $\mathbb{E}(\gamma_u^2) = \overline{\gamma}_u^2 + \sigma_u^2$. Let $\overline{\boldsymbol{\gamma}} = (\overline{\gamma}_u)_{\emptyset \neq u \subseteq \mathcal{S}}$ and $\boldsymbol{\sigma}  = (\sigma_u)_{\emptyset \neq u \subseteq \mathcal{S}}$.

We point out that the conditions for product weights need to be dealt with carefully. Assume that $\gamma_u = \prod_{i\in u} z_i$ for some nonnegative real numbers $z_i$. Further assume that $\mathbb{E}(z_i) = \overline{z}_i$. Then $\mathbb{E}(\gamma_u) = \prod_{i\in u} \overline{z}_i$ and hence $\overline{\gamma}_u = \prod_{i\in u} \overline{z}_i$. Assume that $\mathrm{Var}(z_i) = y_i^2$. Then for $\emptyset \neq u, u' \subseteq \mathcal{S}$ with $v = u \cap u'$ we have
\begin{equation*}
\mathbb{E}(\gamma_u \gamma_{u'}) = \mathbb{E}(\prod_{i \in u} z_i \prod_{i' \in u'} z_i) = \prod_{i \in u \setminus v} \overline{z}_i \prod_{i' \in u' \setminus v} \overline{z}_{i'} \prod_{i'' \in v} y_i.
\end{equation*}
Thus, using these assumption, we do not have $\mathbb{E}(\gamma_u \gamma_{u'}) = \mathbb{E}(\gamma_u) \mathbb{E}(\gamma_{u'})$ in general, i.e., the weights are not independent (as is obvious from the definition $\gamma_u = \prod_{i\in u} z_i$). The analysis for such weights is slightly different and is not considered here. Instead, if the weights are of product form $\gamma_u = \prod_{i\in u} z_i$, we assume that $\overline{\gamma}_u = \prod_{i\in u} \overline{z}_i$ and that $\sigma_u = \prod_{i \in u} y_i$ for all $\emptyset \neq u \subseteq \mathcal{S}$ for some numbers $y_i, z_i \ge 0$.

If the weights $\gamma_u$ are uniformly distributed in the interval $[\overline{\gamma_u} + \delta_u, \overline{\gamma}_u - \delta_u]$, where $0 \le \delta_u \le \overline{\gamma}_u$, then the expectation value is $\overline{\gamma}_u \ge 0$ and $\mathbb{E}(\gamma_u^2) = \frac{1}{2\delta_u} \int_{\overline{\gamma}_u-\delta_u}^{\overline{\gamma}_u+\delta_u} \gamma_u^2 \,\mathrm{d} \gamma_u = \overline{\gamma}_u^2 + \delta_u^2/3$, thus the variance is $\gamma_u^2 = \delta_u^2/3$. Note that $\overline{\gamma}_u - \delta_u \ge 0$, which ensures that the weights are always non-negative. In the following we do not assume that the weights are uniformly distributed, in fact, it is more interesting to assume a different distribution which also allows weights much larger than $\overline{\gamma}_u + \delta_u$.

In the following the expectation $\mathbb{E}$, the variance $\mathrm{Var}$ and the standard deviation $\mathrm{Std}$ are always taken with respect to the random variables $\gamma_u$. The expectation value is now
\begin{equation}\label{eq_expectation}
\mathbb{E}( e^2(\mathcal{H}_{\boldsymbol{\gamma}},P_N(\boldsymbol{g})) ) =  \sum_{\emptyset \neq u \subseteq \mathcal{S}} \overline{\gamma}_u \frac{1}{N} \sum_{n=0}^{N-1}\prod_{i \in u} B_2(\{n g_i/N\}) = e^2(\mathcal{H}_{\overline{\boldsymbol{\gamma}}}, P_N(\boldsymbol{g})).
\end{equation}
Thus, current construction algorithms \cite{NC06,NC06b,SKJ02,SKJ03} can be viewed as finding quadrature rules for which the expected value is small. Here we aim at finding quadrature rules for which, additionally, the variance is small.

We point out that there is a difference between $\overline{\gamma}_u$ 'very small' and $\overline{\gamma}_u = 0$. The restriction $\gamma_u \ge 0$ implies that if $\overline{\gamma}_u = 0$ then $\sigma_u = 0$. Thus if one constructs a lattice rule with error criterion $e^2(\mathcal{H}_{\overline{\boldsymbol{\gamma}}},P_N(\boldsymbol{g}))$ where $\overline{\gamma}_u =0$ for some $\emptyset \neq u \subseteq \mathcal{S}$, then this means that the random variable $\gamma_u = 0$ with probability $1$. Thus setting $\overline{\gamma}_u=0$ means that one knows that $\gamma_u = 0$ and the associated ANOVA term $f_u =0$. Lattice rules constructed using such weights do not have any guarantee that positive weights $\gamma_u > 0$ will yield a good result. To illustrate, consider the two-dimensional example where $\gamma_{1}=1$, $\gamma_{2}=1$ and $\gamma_{1,2}=0$ and the lattice rule has generating vector $\boldsymbol{g}=(1,1)$. This lattice rule works well in this case but not if the weight $\gamma_{1,2}$ changes to $1$, say; see also \cite{S07}.

Using some elementary properties of the variance we obtain
\begin{align*}
\mathrm{Var}(e^2(\mathcal{H}_{\boldsymbol{\gamma}}, P_N(\boldsymbol{g})))  & = \sum_{\emptyset \neq u \subseteq \mathcal{S}} \left(\sigma_u \frac{1}{N} \sum_{n=0}^{N-1} \prod_{i \in u} B_2(\{n g_i/N\}) \right)^2
\end{align*}
and the standard deviation is given by
\begin{equation*}
\mathrm{Std}(e^2(\mathcal{H}_{\boldsymbol{\gamma}}, P_N(\boldsymbol{g}))) = \sqrt{\mathrm{Var}(e^2(\mathcal{H}_{\boldsymbol{\gamma}}, P_N(\boldsymbol{g}))) } = \sqrt{\sum_{\emptyset \neq u \subseteq \mathcal{S}} \left(\sigma_u \frac{1}{N} \sum_{n=0}^{N-1} \prod_{i \in u} B_2(\{n g_i/N\}) \right)^2}.
\end{equation*}
Lattice rules for which the variance $\mathrm{Var}(e^2(\mathcal{H}_{\boldsymbol{\gamma}}, P_N(\boldsymbol{g}))) $ is small are less sensitive to changes of the weights $\gamma_u$. Notice that the variance is difficult to compute in general in high dimensions since it involves a sum over all subsets of $\{1,\ldots, s\}$ for which $\sigma_u > 0$ and which, in general, cannot easily be simplified to a formula which can be computed quickly, even in the case where the weights are of product form.

Notice that \eqref{eq_expectation} is the one-norm of the vector consisting of the error of the projections weighted by the expectation values of the weights, whereas the standard deviation is the two-norm of the vector consisting of the error of the projections weighted by the variance of the weights.

Let $\mu$ be a probability measure on the weights $(\gamma_u)_{\emptyset \neq u \subseteq \mathcal{S}}$. We now use the one-sided Chebyshev inequality which states that for a random variable $X$ with probability measure $\mathrm{Pr}$, expectation $\mathbb{E}(X)$ and standard deviation $\mathrm{Std}(X)$, we have for any $c > 0$ that
\begin{equation*}
\mathrm{Pr}( X - \mathbb{E}(X) \ge c \, \mathrm{Std}(X)) \ge \frac{1}{1+c^2}.
\end{equation*}
Thus we obtain the following result.
\begin{lemma}\label{lem_cheb}
For any $c > 0$ we have
\begin{equation*}
\mu\left(\boldsymbol{\gamma}: e^2(\mathcal{H}_{\boldsymbol{\gamma}}, P_N(\boldsymbol{g}))  \le e^2(\mathcal{H}_{\overline{\boldsymbol{\gamma}}}, P_N(\boldsymbol{g})) + c \, \mathrm{Std}(e^2(\mathcal{H}_{\boldsymbol{\gamma}},P_N(\boldsymbol{g}))) \right) \ge \frac{c^2}{1+c^2}.
\end{equation*}
\end{lemma}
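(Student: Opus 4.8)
The plan is to apply the one-sided Chebyshev (Cantelli) inequality directly to the real-valued random variable $X = e^2(\mathcal{H}_{\boldsymbol{\gamma}}, P_N(\boldsymbol{g}))$, whose randomness is carried by the weights $\boldsymbol{\gamma}$ distributed according to $\mu$, and then to pass to the complementary event. The only inputs needed are the mean and standard deviation of $X$. For the mean, equation~\eqref{eq_expectation} already supplies $\mathbb{E}(X) = e^2(\mathcal{H}_{\overline{\boldsymbol{\gamma}}}, P_N(\boldsymbol{g}))$, so the threshold appearing in the lemma is exactly $\mathbb{E}(X) + c\,\mathrm{Std}(X)$. The standard deviation is the quantity computed just above the lemma, and it is finite because $X$ is a finite sum over the nonempty subsets of $\mathcal{S}$.

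First I would invoke the one-sided Chebyshev inequality in its upper-tail form: for any random variable $X$ with finite variance and any $c > 0$,
\[
\mathrm{Pr}\bigl(X - \mathbb{E}(X) \ge c\,\mathrm{Std}(X)\bigr) \le \frac{1}{1+c^2},
\]
which follows from the general estimate $\mathrm{Pr}(X - \mathbb{E}(X) \ge \lambda) \le \mathrm{Var}(X)/(\mathrm{Var}(X) + \lambda^2)$ by taking $\lambda = c\,\mathrm{Std}(X)$ and dividing numerator and denominator by $\mathrm{Var}(X)$. Passing to the complement, the event $\{X \le \mathbb{E}(X) + c\,\mathrm{Std}(X)\}$ contains $\{X - \mathbb{E}(X) < c\,\mathrm{Std}(X)\}$, so its $\mu$-measure is at least $1 - \frac{1}{1+c^2} = \frac{c^2}{1+c^2}$. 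Substituting $\mathbb{E}(X) = e^2(\mathcal{H}_{\overline{\boldsymbol{\gamma}}}, P_N(\boldsymbol{g}))$ then yields the stated inequality.

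There is essentially no obstacle here: the lemma is a one-line consequence of Cantelli's inequality once the mean has been read off from \eqref{eq_expectation}. The only point deserving care is the orientation of the one-sided bound — it is the upper bound on the upper tail that is used, and the lower bound $\frac{c^2}{1+c^2}$ in the conclusion arises purely by complementation rather than from the tail estimate itself. I would therefore state Cantelli's inequality in the $\le$ form and phrase the final step explicitly as a passage to the complementary event, so as to avoid any sign confusion.
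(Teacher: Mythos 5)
Your proof is correct and follows essentially the same route as the paper: apply the one-sided Chebyshev (Cantelli) inequality to $X = e^2(\mathcal{H}_{\boldsymbol{\gamma}}, P_N(\boldsymbol{g}))$, identify $\mathbb{E}(X) = e^2(\mathcal{H}_{\overline{\boldsymbol{\gamma}}}, P_N(\boldsymbol{g}))$ via \eqref{eq_expectation}, and pass to the complementary event. In fact, your careful statement of the upper-tail form $\mathrm{Pr}\bigl(X - \mathbb{E}(X) \ge c\,\mathrm{Std}(X)\bigr) \le \frac{1}{1+c^2}$ silently corrects a sign typo in the paper, which writes this tail probability with $\ge \frac{1}{1+c^2}$ even though the complementation step it then performs requires the $\le$ direction.
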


As noted above, the standard deviation is in general difficult to compute, however, using Jensen's inequality we have
\begin{align*}
\mathrm{Std}(e^2(\mathcal{H}_{\boldsymbol{\sigma}}, P_N(\boldsymbol{g}))) & = \sqrt{\sum_{\emptyset \neq u \subseteq \mathcal{S}} \left(\sigma_u \frac{1}{N} \sum_{n=0}^{N-1} \prod_{i \in u} B_2(\{n g_i/N\}) \right)^2} \\ & \le \sum_{\emptyset \neq u \subseteq \mathcal{S}} \sigma_u \frac{1}{N} \sum_{n=0}^{N-1} \prod_{i\in u} B_2(\{n g_i/N\}) = e^2(\mathcal{H}_{\boldsymbol{\sigma}}, P_N(\boldsymbol{g})).
\end{align*}
Thus the square worst-case error with the variances as weights is an upper bound on the standard deviation. This upper bound can easily be computed (for instance for variances of product form).

Assume now that if for some $\emptyset \neq u \subseteq \mathcal{S}$ we have $\sigma_u  = 0$, then also $\gamma_u = 0$ and therefore $\overline{\gamma}_u = 0$. Using H\"older's inequality we have
\begin{align*}
e^2(\mathcal{H}_{\boldsymbol{\gamma}}, P_N(\boldsymbol{g})) & = \sum_{\satop{\emptyset \neq u \subseteq \mathcal{S}}{\sigma_u > 0}} \frac{\gamma_u}{\sigma_u} \left(\sigma_u \frac{1}{N} \sum_{n=0}^{N-1} \prod_{i \in u} B_2(\{n g_i/N\}) \right) \\ & \le \mathrm{Std}(e^2(\mathcal{H}_{\boldsymbol{\sigma}}, P_N(\boldsymbol{g}))) \sqrt{\sum_{\satop{\emptyset \neq u \subseteq \mathcal{S}}{\sigma_u > 0}} \frac{\gamma_u^2}{\sigma_u^2} }.
\end{align*}
Thus a small standard deviation implies a small expected error. Combining the last two inequalities we obtain the following result.
\begin{lemma}
Assume that if for some $\emptyset \neq u \subseteq \mathcal{S}$ we have $\sigma_u  = 0$, then also $\gamma_u = 0$ and therefore $\overline{\gamma}_u = 0$. Then we have
\begin{equation*}
e^2(\mathcal{H}_{\boldsymbol{\gamma}}, P_N(\boldsymbol{g})) \le \mathrm{Std}(e^2(\mathcal{H}_{\boldsymbol{\sigma}}, P_N(\boldsymbol{g}))) \sqrt{\sum_{\emptyset \neq u \subseteq \mathcal{S}} \frac{\gamma_u^2}{\sigma_u^2}} \le e^2(\mathcal{H}_{\boldsymbol{\sigma}},P_N(\boldsymbol{g})) \sqrt{\sum_{\emptyset \neq u \subseteq \mathcal{S}} \frac{\gamma_u^2}{\sigma_u^2} }.
\end{equation*}
\end{lemma}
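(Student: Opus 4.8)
The plan is to chain together the two inequalities derived immediately above the statement; no new machinery is needed. The first of these is the H\"older (Cauchy--Schwarz) bound
\begin{equation*}
e^2(\mathcal{H}_{\boldsymbol{\gamma}}, P_N(\boldsymbol{g})) \le \mathrm{Std}(e^2(\mathcal{H}_{\boldsymbol{\sigma}}, P_N(\boldsymbol{g}))) \sqrt{\sum_{\satop{\emptyset \neq u \subseteq \mathcal{S}}{\sigma_u > 0}} \frac{\gamma_u^2}{\sigma_u^2}},
\end{equation*}
which I would obtain by writing each summand of $e^2(\mathcal{H}_{\boldsymbol{\gamma}}, P_N(\boldsymbol{g}))$ as the product $\frac{\gamma_u}{\sigma_u} \cdot \bigl(\sigma_u \frac{1}{N} \sum_{n=0}^{N-1} \prod_{i\in u} B_2(\{n g_i/N\})\bigr)$, ranging over those $u$ with $\sigma_u > 0$, and then applying the Cauchy--Schwarz inequality. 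The point is to recognize that the $\ell^2$-norm of the second factor is exactly $\mathrm{Std}(e^2(\mathcal{H}_{\boldsymbol{\sigma}}, P_N(\boldsymbol{g})))$, as defined earlier in this section.

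First I would address the restriction of the summation to indices with $\sigma_u > 0$. By the hypothesis of the lemma, $\sigma_u = 0$ forces $\gamma_u = 0$, so each excluded index contributes a vanishing numerator. Adopting the natural convention $\gamma_u^2/\sigma_u^2 = 0$ whenever $\sigma_u = 0$ (equivalently, since $\gamma_u = 0$ the corresponding term also drops out of $e^2(\mathcal{H}_{\boldsymbol{\gamma}}, P_N(\boldsymbol{g}))$), the restricted sum $\sum_{\sigma_u > 0}$ and the full sum $\sum_{\emptyset \neq u \subseteq \mathcal{S}}$ coincide. This is precisely what licenses writing the full sum $\sqrt{\sum_{\emptyset \neq u \subseteq \mathcal{S}} \gamma_u^2/\sigma_u^2}$ in the statement in place of the restricted one.

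To finish, I would invoke the Jensen-type bound established just before the statement, namely
\begin{equation*}
\mathrm{Std}(e^2(\mathcal{H}_{\boldsymbol{\sigma}}, P_N(\boldsymbol{g}))) \le e^2(\mathcal{H}_{\boldsymbol{\sigma}}, P_N(\boldsymbol{g})),
\end{equation*}
which replaces the $\ell^2$-norm of the variance-weighted projection errors by their $\ell^1$-norm. Substituting this into the first inequality immediately yields the second inequality of the lemma and closes the chain. I expect no genuine obstacle here: the only point requiring any care is the bookkeeping of the indices with $\sigma_u = 0$, and the hypothesis of the lemma is exactly the condition that renders that bookkeeping harmless. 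Everything else is a direct combination of the two displays preceding the statement.
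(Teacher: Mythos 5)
Your proposal is correct and follows essentially the same route as the paper, which obtains the lemma precisely by combining the H\"older (Cauchy--Schwarz) bound and the Jensen-type bound displayed immediately before the statement. Your explicit treatment of the indices with $\sigma_u = 0$ (using the hypothesis $\sigma_u = 0 \Rightarrow \gamma_u = 0$ to identify the restricted and full sums) is a detail the paper leaves implicit, but it is exactly the intended reading.
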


If the weights $\boldsymbol{\sigma}$ are decaying such that  the upper bound is independent of the dimension, i.e. strong tractability (see \cite{NW08,NW10,SW98}) holds, then for any weights $\boldsymbol{\gamma}$ such that the expression $\sum_{u\subset \mathbb{N}, |u| <\infty} \frac{\gamma_u^2}{\sigma_u^2} < \infty$ one also obtains a bound which is independent of the dimension, i.e. one has strong tractability. Further, if $e^2(\mathcal{H}_{\boldsymbol{\sigma}},P_N(\boldsymbol{g}))$ satisfies strong tractability, then also the standard deviation is bounded.

Combining the last two lemmas we obtain the following corollary.
\begin{corollary}
Assume that if for some $\emptyset \neq u \subseteq \mathcal{S}$ we have $\sigma_u  = 0$, then also $\gamma_u = 0$ and therefore $\overline{\gamma}_u = 0$. Then for any $c > 0$ we have
\begin{equation*}
\mu\left(\boldsymbol{\gamma}: e^2(\mathcal{H}_{\boldsymbol{\gamma}}, P_N(\boldsymbol{g}))  \le e^2(\mathcal{H}_{\boldsymbol{\sigma}}, P_N(\boldsymbol{g})) \left(\sqrt{\sum_{\satop{\emptyset \neq u \subseteq \mathcal{S}}{\sigma_u > 0}} \overline{\gamma}^2_u/\sigma_u^2 } + c \right) \right) \ge \frac{c^2}{1+c^2}.
\end{equation*}
\end{corollary}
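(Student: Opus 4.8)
The plan is to combine the two lemmas above by a set-inclusion argument: I will show that the event whose measure is estimated in Lemma~\ref{lem_cheb} is contained in the event appearing in the corollary, so that the lower bound $c^2/(1+c^2)$ carries over by monotonicity of $\mu$. The key observation is that among the three quantities $e^2(\mathcal{H}_{\overline{\boldsymbol{\gamma}}},P_N(\boldsymbol{g}))$, $\mathrm{Std}(e^2(\mathcal{H}_{\boldsymbol{\gamma}},P_N(\boldsymbol{g})))$ and $e^2(\mathcal{H}_{\boldsymbol{\sigma}},P_N(\boldsymbol{g}))$ none is random: each depends only on the fixed data $\overline{\boldsymbol{\gamma}}$, $\boldsymbol{\sigma}$ and $\boldsymbol{g}$, while only $e^2(\mathcal{H}_{\boldsymbol{\gamma}},P_N(\boldsymbol{g}))$ is a genuine function of the realized weights $\boldsymbol{\gamma}$. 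Hence it suffices to bound the deterministic right-hand side $e^2(\mathcal{H}_{\overline{\boldsymbol{\gamma}}},P_N(\boldsymbol{g}))+c\,\mathrm{Std}(e^2(\mathcal{H}_{\boldsymbol{\gamma}},P_N(\boldsymbol{g})))$ of the inequality defining the event in Lemma~\ref{lem_cheb} by the deterministic threshold appearing in the corollary.

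To bound the first summand, I would repeat the H\"older (Cauchy--Schwarz) step used to prove the second lemma above, but with the mean weights $\overline{\boldsymbol{\gamma}}$ in place of $\boldsymbol{\gamma}$. Writing
\[
e^2(\mathcal{H}_{\overline{\boldsymbol{\gamma}}},P_N(\boldsymbol{g}))
= \sum_{\satop{\emptyset\neq u\subseteq\mathcal{S}}{\sigma_u>0}}
\frac{\overline{\gamma}_u}{\sigma_u}\left(\sigma_u\frac1N\sum_{n=0}^{N-1}\prod_{i\in u}B_2(\{ng_i/N\})\right),
\]
where the restriction to $\sigma_u>0$ is legitimate because the standing hypothesis forces $\overline{\gamma}_u=0$ whenever $\sigma_u=0$, Cauchy--Schwarz yields $e^2(\mathcal{H}_{\overline{\boldsymbol{\gamma}}},P_N(\boldsymbol{g}))\le \mathrm{Std}(e^2(\mathcal{H}_{\boldsymbol{\sigma}},P_N(\boldsymbol{g})))\,\sqrt{\sum_{\sigma_u>0}\overline{\gamma}_u^2/\sigma_u^2}$. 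The Jensen bound already established above, namely $\mathrm{Std}(e^2(\mathcal{H}_{\boldsymbol{\sigma}},P_N(\boldsymbol{g})))\le e^2(\mathcal{H}_{\boldsymbol{\sigma}},P_N(\boldsymbol{g}))$, then upgrades this to $e^2(\mathcal{H}_{\overline{\boldsymbol{\gamma}}},P_N(\boldsymbol{g}))\le e^2(\mathcal{H}_{\boldsymbol{\sigma}},P_N(\boldsymbol{g}))\sqrt{\sum_{\sigma_u>0}\overline{\gamma}_u^2/\sigma_u^2}$. For the second summand I would invoke the same Jensen bound directly, giving $c\,\mathrm{Std}(e^2(\mathcal{H}_{\boldsymbol{\gamma}},P_N(\boldsymbol{g})))\le c\,e^2(\mathcal{H}_{\boldsymbol{\sigma}},P_N(\boldsymbol{g}))$, after noting that the standard deviation in Lemma~\ref{lem_cheb} is exactly the quantity written as $\mathrm{Std}(e^2(\mathcal{H}_{\boldsymbol{\sigma}},P_N(\boldsymbol{g})))$.

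Adding the two estimates gives
\[
e^2(\mathcal{H}_{\overline{\boldsymbol{\gamma}}},P_N(\boldsymbol{g}))+c\,\mathrm{Std}(e^2(\mathcal{H}_{\boldsymbol{\gamma}},P_N(\boldsymbol{g})))
\le e^2(\mathcal{H}_{\boldsymbol{\sigma}},P_N(\boldsymbol{g}))\left(\sqrt{\sum_{\satop{\emptyset\neq u\subseteq\mathcal{S}}{\sigma_u>0}}\overline{\gamma}_u^2/\sigma_u^2}+c\right),
\]
which is precisely the deterministic threshold in the corollary. Consequently every $\boldsymbol{\gamma}$ satisfying the inequality in Lemma~\ref{lem_cheb} also satisfies the inequality in the corollary, so the corollary's event contains the event of Lemma~\ref{lem_cheb} and therefore has $\mu$-measure at least $c^2/(1+c^2)$. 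I do not expect a genuine obstacle here, since the argument merely chains two already-proved inequalities; the only points requiring care are the clean separation of the deterministic thresholds from the random variable $e^2(\mathcal{H}_{\boldsymbol{\gamma}},P_N(\boldsymbol{g}))$, and the bookkeeping that restricts all index sums to $\sigma_u>0$ so that the ratios $\overline{\gamma}_u^2/\sigma_u^2$ are well defined.
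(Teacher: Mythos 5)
Your proposal is correct and takes essentially the same route as the paper: the paper's proof is exactly the combination you spell out, namely Chebyshev (Lemma~\ref{lem_cheb}) together with the H\"older/Cauchy--Schwarz step applied to the deterministic mean weights $\overline{\boldsymbol{\gamma}}$ and the Jensen bound $\mathrm{Std}(e^2(\mathcal{H}_{\boldsymbol{\sigma}},P_N(\boldsymbol{g}))) \le e^2(\mathcal{H}_{\boldsymbol{\sigma}},P_N(\boldsymbol{g}))$, followed by monotonicity of $\mu$. Your two points of care --- that both thresholds are deterministic, and that the second lemma's H\"older argument must be re-run with $\overline{\boldsymbol{\gamma}}$ in place of the random $\boldsymbol{\gamma}$ (legitimate since $\sigma_u=0$ forces $\overline{\gamma}_u=0$) --- are precisely the details the paper leaves implicit in its phrase ``combining the last two lemmas.''
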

This result can be viewed as a robustness result with respect to weights. If one constructs a lattice rule using $\boldsymbol{\sigma} = (\sigma_u)$ as weights, then for a set of weights taken from a set with measure at least $\frac{c^2}{1+c^2}$, the error is bounded by
\begin{equation*}
e^2(\mathcal{H}_{\boldsymbol{\sigma}}, P_N(\boldsymbol{g})) \left(\sqrt{\sum_{\satop{\emptyset \neq u \subseteq \mathcal{S}}{\gamma_u > 0}} \overline{\gamma}^2_u/\sigma_u^2 } + c \right).
\end{equation*}

There is one notable exception, namely, if $\sigma_u =0$ for some $\emptyset \neq u \subseteq \mathcal{S}$, then no robustness with respect to the projection on $u$ can be obtained (as was also illustrated above).

In the following we show that one can do better by taking the robustness into account in the construction of the lattice rule itself.

\section{A construction of robust lattice rules}\label{sec3}

In this section we generalize the component-by-component algorithm from \cite{K59,SR02}. We repeat some facts from the fast cbc algorithm of Nuyens and Cools~\cite{NC06, NC06b}.

\subsection{The fast Fourier transform method}

Nuyens and Cools \cite{NC06,NC06b} have shown how to use the fast Fourier transform to reduce the computation time of the component-by-component algorithm. Because of the importance of these ideas we repeat them here (as is well understood, we see below that the algorithm of Nuyens and Cools actually calculates slightly more, which is important for the cbc$r$c algorithm below).

For simplicity of exposition we assume product weights $\gamma_u = \prod_{i\in u} \widehat{\gamma}_i$ and that $N$ is a prime number. For more general cases see \cite{NC06,NC06b}. Assume that the coordinates $g_1^\ast,\ldots, g_{j-1}^\ast \in \{1,\ldots, N-1\}$ are already fixed. Then we write the error criterion for $j > 1$ in the form
\begin{align*}
& e^2(\mathcal{H}_{\boldsymbol{\gamma}},P_N((g^\ast_1,\ldots, g^\ast_{j-1},z))) \\ & = -1 + \frac{1}{N} \sum_{n=0}^{N-1} \prod_{i=1}^{j-1} \left(1 + \widehat{\gamma}_i B_2(\{n g_i^\ast /N\})\right) \left(1+ \widehat{\gamma}_j B_2(\{n z/N\})\right) \\ & = -1 + \frac{1}{N} \sum_{n=0}^{N-1} \prod_{i=1}^{j-1} \left(1 + \widehat{\gamma}_i B_2(\{n g_i^\ast /N\})\right) + \frac{1}{N} \sum_{n=0}^{N-1} \prod_{i=1}^{j-1} \left(1 + \widehat{\gamma}_i B_2(\{n g_i^\ast / N\}) \right) B_2(\{n z / N\}) \\ & = e^2(\mathcal{H}_{\boldsymbol{\gamma}}, P_N((g_1^\ast,\ldots, g_{j-1}^\ast))) + \frac{1}{N}  \prod_{i=1}^{j-1} \left(1 + \widehat{\gamma}_i / 6 \right) / 6 \\ & + \frac{1}{N} \sum_{n=1}^{N-1} \prod_{i=1}^{j-1} \left(1 + \widehat{\gamma}_i B_2(\{n g_i^\ast / N\}) \right) B_2(\{n z / N\}).
\end{align*}
Since the components $g_1^\ast,\ldots, g_{j-1}^\ast$ are fixed, the value $e^2(\mathcal{H}_{\boldsymbol{\gamma}}, P_N((g_1,\ldots, g_{j-1})))$ and $\frac{1}{N}  \prod_{i=1}^{j-1} \left(1 + \widehat{\gamma}_i / 6 \right) / 6 $ does not depend on $z$ and can therefore be ignored. Thus it suffices to calculate
\begin{equation*}
\Psi(z) = \sum_{n=1}^{N-1} \prod_{i=1}^{j-1} \left(1 + \widehat{\gamma}_i B_2(\{n g_i^\ast / N\}) \right) B_2(\{n z / N\}) \quad \mbox{for } 1 \le z \le N-1.
\end{equation*}
We define the matrix
\begin{equation*}
\Omega = \left(B_2(\{n z/ N\})\right)_{1 \le n,z \le N-1}
\end{equation*}
and the vector
\begin{equation*}
\boldsymbol{p} = \left(\prod_{i=1}^{j-1} (1+\widehat{\gamma}_i B_2(\{0 g_i^\ast/N\}) ), \ldots, \prod_{i=1}^{j-1} (1 + \widehat{\gamma}_{i} B_2(\{(N-1) g_{i}^\ast/N\}) ) \right)^\top.
\end{equation*}
Then
\begin{equation*}
(\Psi(1),\ldots, \Psi(N-1))^\top = \Omega \boldsymbol{p}.
\end{equation*}
The matrix $\Omega$ has some structure which allows one to use the fast Fourier transform. Let $1 < v < N$ be a primitive element in the finite field $\mathbb{Z}_N=\{0,1,\ldots, N-1\}$ of prime order $N$. Note that the multiplicative inverse $v^{-1}$ is then also a primitive element. We define the permutation matrix $\Pi(v) = (\pi_{k,l}(v))_{1 \le k,l \le N-1}$ by
\begin{equation*}
\pi_{k,l}(v) = \left\{\begin{array}{ll} 1 & \mbox{if } k = v^l \pmod{N}, \\ 0 & \mbox{otherwise}. \end{array} \right.
\end{equation*}
Note that $\Pi(v) \Pi(v)^\top = I$, the identity matrix. Let $C=(c_{k,l})_{1 \le k,l \le N-1}$ be defined by
\begin{equation*}
C = \Pi(v)^\top \Omega \Pi(v^{-1}),
\end{equation*}
hence
\begin{equation*}
c_{k,l} = \sum_{u,w=1}^{N-1} \pi_{u,k}(v) B_2(\{uw/N\}) \pi_{w,l}(v^{-1}) = B_2(\{v^{k-l}/N\}).
\end{equation*}
The matrix $C$ is therefore circulant. Let $F_{N-1} = (N-1)^{-1/2} (f_{k,l})_{0 \le k,l \le N-2}$ be the Fourier matrix of order $N-1$ where $f_{k,l} = \mathrm{e}^{2\pi \mathrm{i} kl/N}$. Then $D = F_{N-1} C F_{N-1}^{-1}$ is a diagonal matrix. Thus we have
\begin{equation*}
\Omega = \Pi(v) C \Pi(v^{-1})^\top = \Pi(v) F_{N-1}^{-1} D F_{N-1} \Pi(v^{-1})^\top.
\end{equation*}
Consider now the matrix-vector multiplication $\Omega \boldsymbol{p}$. Multiplying a vector with the permutation matrices $\Pi(v), \Pi(v^{-1})^\top$ takes $\mathcal{O}(N)$ operations, the matrix vector-multiplication with the matrices $F_{N-1}, F_{N-1}^{-1}$ can be carried out in $\mathcal{O}(N\log N)$ operations using the fast Fourier transform. Multiplying the diagonal matrix $D$ with a vector takes $\mathcal{O}(N)$ operations. Thus the matrix-vector multiplication $\Omega \boldsymbol{p}$  can be carried out in $\mathcal{O}(N \log N)$ operations. For more details see \cite{NC06c}.

Notice that the fast matrix vector multiplication directly yields the whole vector $(\Psi(1),\ldots, \Psi(N-1))^\top$. This vector can be ordered (using a sorting algorithm) to obtain $\Psi(z_1) \le \Psi(z_2) \le \cdots \le \Psi(z_{N-1})$. This can be done in $\mathcal{O}(N \log N)$ operations. Thus, by the above arguments, we can compute $z_1,\ldots, z_{N-1}$ such that $$e^2(\mathcal{H}_{\boldsymbol{\gamma}}, P_N((g_1^\ast,\ldots, g_{j-1}^\ast,z_1))) \le  \cdots \le e^2(\mathcal{H}_{\boldsymbol{\gamma}}, P_N((g_1^\ast,\ldots, g_{j-1}^\ast,z_{N-1})))$$ in $\mathcal{O}(N \log N)$ operations.

\subsection{The fast cbc$2$c algorithm}

In the previous section we have shown some robustness of lattice rules which are constructed for a given set of weights. In this section we modify the fast cbc algorithm \cite{NC06,NC06b} to construct lattice rules for which, simultaneously, $\mathbb{E}(e^2(\mathcal{H}_{\boldsymbol{\gamma}}, P_N(\boldsymbol{g}))) = e^2(\mathcal{H}_{\overline{\boldsymbol{\gamma}}}, P_N(\boldsymbol{g}))$ and  $\mathrm{Std}(e^2(\mathcal{H}_{\boldsymbol{\gamma}}, P_N(\boldsymbol{g})))$ are small. Since the standard deviation $\mathrm{Std}(e^2(\mathcal{H}_{\boldsymbol{\gamma}}, P_N(\boldsymbol{g})))$ is in general difficult to compute, we use $e^2(\mathcal{H}_{\boldsymbol{\sigma}}, P_N(\boldsymbol{g}))$ as criterion instead. This has the additional advantage that the roles of $\overline{\boldsymbol{\gamma}}$ and $\boldsymbol{\sigma}$ are interchangeable.

Throughout the paper let $\kappa$ denote the number of distinct prime factors of the integer $N \ge 2$. We use \cite[Theorem~3]{DPW08}, which states that for any $c \ge 1$, the proportion of generating vectors $\boldsymbol{g} \in \{1,\ldots, N-1\}^s$ which satisfy
\begin{equation}\label{ineq_bound}
e^2(\mathcal{H}_{\boldsymbol{\gamma}}, P_N(\boldsymbol{g})) \le  \left( \frac{c}{N}\sum_{\emptyset \neq u \subseteq \mathcal{S}} \gamma_u^{1/\tau} (2^\kappa \pi^{-2} \zeta(2 /\tau) )^{|u|} \right)^{\tau} \quad \mbox{for all } 1 \le \tau < 2,
\end{equation}
where $\zeta(r) = \sum_{k=1}^\infty k^{-r}$ is the Riemann zeta function, is bigger than $1-c^{-1}$, i.e. there are more than $(N-1)^s (1-c^{-1})$ generating vectors $\boldsymbol{g} \in \{1,\ldots, N-1\}^s$ which satisfy the above bound (see also \cite{SL11} for other criteria and bounds when $N$ is not prime). Further, \cite[Theorem~10]{DPW08} states that a generating vector $\boldsymbol{g}^\ast = (g_1^\ast,\ldots, g_s^\ast) \in \{1,\ldots, N-1\}$ which satisfies \eqref{ineq_bound} can be found component-by-component. Thus we obtain the following result which follows from the fact that the intersection of two sets with measure bigger than $1-c_1^{-1}$ and $1-c_2^{-1}$, where $c_1,c_2 \ge 1$ are such that $1-c^{-1}_1 + 1-c_{2}^{-1} \ge 1$, is non-empty.

\begin{algorithm}[The fast component-by-component two criteria (fast cbc$2$c algorithm]\label{alg1}
Given: natural numbers $N, s$, nonnegative real numbers $\overline{\gamma}_u, \sigma_u$ for all $\emptyset \neq u \subseteq \mathcal{S}$;  $1 \le c_1 \le 2$, $2 \le c_2 \le \infty$ such that $c_1^{-1} + c_2^{-1} = 1$.
\begin{itemize}
\item Set $g_1^\ast = 1$;
\item For $j=2,\ldots, s$ do the following:
\begin{itemize}
\item  {\bf Hard constraint}

Let $K_1 =  \min\{\lfloor (N-1) (1-c^{-1}_1) \rfloor+1, N-1\}$. Find the set of integers $A = \{z_1,\ldots, z_{K_1} \} \subseteq \{1,\ldots, N-1\}$ which satisfies:
\begin{equation*}
e^2(\mathcal{H}_{\overline{\boldsymbol{\gamma}}}, P_N((g_1^\ast, \ldots, g_{j-1}^\ast, z_i)))  \le e^2(\mathcal{H}_{\overline{\boldsymbol{\gamma}}}, P_N((g_1^\ast, \ldots, g_{j-1}^\ast, z)))
\end{equation*}
for all $1 \le i \le K_1$ and $z \in \{1,\ldots, N-1\} \setminus A$ (using the fast Fourier transform).

\item {\bf Soft constraint}

Let $K_2 =  \min\{\lfloor (N-1) (1-c^{-1}_2) \rfloor + 1, N-1\}$. Find the set of integers $B = \{y_1,\ldots, y_{K_2}\} \subseteq \{1,\ldots, N-1\}$ which satisfies:
\begin{equation*}
e^2(\mathcal{H}_{\boldsymbol{\sigma}}, P_N((g_1^\ast, \ldots, g_{j-1}^\ast, y_i)))  \le e^2(\mathcal{H}_{\boldsymbol{\sigma}}, P_N((g_1^\ast, \ldots, g_{j-1}^\ast, y)))
\end{equation*}
for all $1 \le i \le K_2$ and $y \in \{1,\ldots, N-1\} \setminus B$ (using the fast Fourier transform).

\item Choose $g_j^\ast \in A \cap B$ which minimizes $e^2(\mathcal{H}_{\overline{\boldsymbol{\gamma}}}, P_N((g_1^\ast, \ldots, g_{j-1}^\ast, w)))$ as a function of $w$.
\end{itemize}
\item Return $\boldsymbol{g}^\ast = (g_1^\ast, \ldots, g_s^\ast)$.
\end{itemize}
\end{algorithm}

Some comments are in order:
\begin{itemize}
\item[(i)] We have $K_1 + K_2 > (N-1) (1-c_1^{-1} + 1-c_2^{-1}) = N-1$. Thus the set $A \cap B$ is not empty. Further note that the vector $\boldsymbol{g}^\ast$ found by Algorithm~\ref{alg1} satisfies the bounds in Theorem~\ref{thm1} (see also  \cite[Theorem~10]{DPW08}).
\item[(ii)] The algorithm is basically symmetric in the constraints, but, by choosing $1 \le c_1 \le 2$, the first constraint is at least as hard to satisfy as the second one, since the upper bound is lower.
\item[(iii)] We have biased the algorithm towards the hard constraint. Instead of choosing the value $g_d^\ast \in A \cap B$ which minimizes $e^2(\mathcal{H}_{\overline{\boldsymbol{\gamma}}}, P_N((g_1^\ast, \ldots, g_{d-1}^\ast, w)))$ actually any value in the set $A \cap B$ could be chosen. The results still apply in this case.
\item[(iv)] The classical cbc algorithm corresponds to the special case where $c_1=1$ and $c_2=\infty$. Further, the classical cbc algorithm can also be obtained by choosing $\overline{\boldsymbol{\gamma}} = \boldsymbol{\sigma}$ (in which case the choice of $1 \le c_1, c_2 \le \infty$ is irrelevant).
\item[(v)] The fast cbc algorithm can be used to calculate the values $e^2(\mathcal{H}_{\overline{\boldsymbol{\gamma}}}, P_N((g_1^\ast, \ldots, g_{d-1}^\ast, z)))$ and $e^2(\mathcal{H}_{\boldsymbol{\sigma}}, P_N((g_1^\ast, \ldots, g_{d-1}^\ast, y)))$ for all $y,z \in \{1,\ldots, N-1\}$ very efficiently. The values need to be sorted and then one needs to choose a value in $A \cap B$. The main complexity is calculating the worst-case errors, hence the number of operations needed for the algorithm is the same as that for the fast cbc construction \cite{NC06,NC06b} (but with a larger constant since we have two worst-case errors). Thus one has a fast cbc2c algorithm.
\end{itemize}

From \cite[Theorem~10]{DPW08} we obtain the following result concerning the cbc$2$c algorithm.

\begin{theorem}\label{thm1}
Let $N$ be an integer and $1 \le c_1,c_2 \le \infty$ such that $c_1^{-1} + c_2^{-1} = 1$. Then the generating vector $\boldsymbol{g}^\ast \in \{1,\ldots, N-1\}^s$ constructed by the cbc$2$c algorithm satisfies
\begin{align*}
e^2(\mathcal{H}_{\overline{\boldsymbol{\gamma}}}, P_N(\boldsymbol{g}^\ast)) & \le \left(\frac{c_1}{N} \sum_{\emptyset \neq u \subseteq \mathcal{S}} \overline{\gamma}_u^{1/\tau} (2^\kappa\pi^{-2} \zeta(2/\tau))^{|u|} \right)^{\tau} \quad \mbox{for all } 1 \le \tau < 2, \mbox{ and } \\ e^2(\mathcal{H}_{\boldsymbol{\sigma}}, P_N(\boldsymbol{g}^\ast))) & \le \left(\frac{c_2}{N} \sum_{\emptyset \neq u \subseteq \mathcal{S}} \sigma_u^{1/\tau} (2^\kappa\pi^{-2} \zeta(2/\tau))^{|u|} \right)^{\tau} \quad \mbox{for all } 1 \le \tau < 2.
\end{align*}
\end{theorem}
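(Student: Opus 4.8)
The plan is to establish both bounds simultaneously by induction on the coordinate index $j$, maintaining at every stage that the partial generating vector $(g_1^\ast,\ldots,g_j^\ast)$ satisfies the $\tau$-indexed bound with constant $c_1$ for the weights $\overline{\boldsymbol{\gamma}}$ and, at the same time, the bound with constant $c_2$ for the weights $\boldsymbol{\sigma}$. The engine for each single criterion is \cite[Theorem~10]{DPW08}: for a fixed nonnegative weight sequence and a fixed constant $c \ge 1$, its component-by-component argument guarantees, given an admissible partial vector of length $j-1$, \emph{more than} $(N-1)(1-c^{-1})$ choices of the next component $z$ for which the resulting length-$j$ vector still satisfies the corresponding bound for all $1 \le \tau < 2$. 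I would invoke this statement twice at each step — once with $(\overline{\boldsymbol{\gamma}},c_1)$ and once with $(\boldsymbol{\sigma},c_2)$ — and then reconcile the two families of admissible components. Since \eqref{ineq_bound} holds for arbitrary nonnegative weights, both applications are legitimate, and the product-weight / prime-$N$ hypotheses used in the fast Fourier transform discussion are needed only for the efficiency of the search, not for the validity of the bound.

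The key observation linking \cite[Theorem~10]{DPW08} to the sorting performed in Algorithm~\ref{alg1} is that the admissible set for each criterion is \emph{downward closed} in the worst-case error ordering. Indeed, for fixed $(g_1^\ast,\ldots,g_{j-1}^\ast)$ the right-hand side of the bound does not depend on $z$, so the admissible set for the hard constraint is
\begin{equation*}
G_1 = \left\{ z \in \{1,\ldots,N-1\} \;:\; e^2(\mathcal{H}_{\overline{\boldsymbol{\gamma}}}, P_N((g_1^\ast,\ldots,g_{j-1}^\ast,z))) \le T_1 \right\}, \qquad T_1 := \inf_{1 \le \tau < 2} \left( \frac{c_1}{N} \sum_{\emptyset \neq u \subseteq \{1,\ldots,j\}} \overline{\gamma}_u^{1/\tau} (2^\kappa \pi^{-2} \zeta(2/\tau))^{|u|} \right)^{\tau},
\end{equation*}
which consists exactly of those components whose worst-case error falls below the threshold $T_1$. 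By the previous paragraph $|G_1| > (N-1)(1-c_1^{-1})$, and since $|G_1|$ is an integer this yields $|G_1| \ge \lfloor (N-1)(1-c_1^{-1})\rfloor + 1 \ge K_1$. Because $A$ collects the $K_1$ components of \emph{smallest} worst-case error and $G_1$ is downward closed, every element of $A$ has error at most the $K_1$-th smallest value, which is at most $T_1$; hence $A \subseteq G_1$. The identical argument applied to $(\boldsymbol{\sigma},c_2)$ gives $B \subseteq G_2$, where $G_2$ is the analogous admissible set for the soft constraint. (One checks that the capped cases $K_i = N-1$ cause no trouble, since there the size bound forces $|G_i| = N-1$, i.e. $G_i$ is the whole index set.)

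It then remains to select a component lying in both admissible sets. As recorded in comment~(i), $K_1 + K_2 > (N-1)(1-c_1^{-1}) + (N-1)(1-c_2^{-1}) = (N-1)(2 - (c_1^{-1}+c_2^{-1})) = N-1$, using $c_1^{-1}+c_2^{-1}=1$; since $A,B \subseteq \{1,\ldots,N-1\}$ with $|A|+|B| = K_1+K_2 > N-1$, we have $A \cap B \neq \emptyset$. Choosing $g_j^\ast \in A \cap B \subseteq G_1 \cap G_2$ therefore preserves the $c_1$-bound for $\overline{\boldsymbol{\gamma}}$ and the $c_2$-bound for $\boldsymbol{\sigma}$ simultaneously, completing the inductive step; the base case $g_1^\ast=1$ is the single-coordinate estimate that initialises the construction in \cite[Theorem~10]{DPW08}. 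Taking $j=s$ then yields the two displayed inequalities. I expect the point requiring most care to be the precise per-step statement extracted from \cite[Theorem~10]{DPW08}: one must verify that its averaging/Markov step indeed produces more than $(N-1)(1-c^{-1})$ admissible next-components while keeping the \emph{same} constant $c$ at every coordinate, so that selecting any of the best $K_1$ components (not only the minimiser) still propagates the bound. The remaining work is bookkeeping of the floor functions and of the degenerate endpoints $c_1=1,\ c_2=\infty$, where one constraint becomes vacuous and the scheme collapses to the classical cbc construction.
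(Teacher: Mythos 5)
Your proposal is correct and takes essentially the same route as the paper: the paper derives Theorem~\ref{thm1} directly from \cite[Theorem~10]{DPW08} applied once per criterion, combined with the counting argument $K_1 + K_2 > N-1$ recorded in comment~(i), which guarantees $A \cap B \neq \emptyset$ at each step of Algorithm~\ref{alg1}. Your explicit verification that the sorted sets $A$ and $B$ sit inside the admissible sets (via downward closedness and the integer bound $\lfloor (N-1)(1-c_i^{-1})\rfloor + 1$) merely spells out details the paper leaves implicit.
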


For $c > 0$ and $1 \le \tau, \tau' < 2$ let
\begin{equation}\label{def_B}
B_{\overline{\boldsymbol{\gamma}}, \boldsymbol{\sigma}}(c,\tau,\tau',N) = \left(\frac{c_1}{N} \sum_{\emptyset \neq u \subseteq \mathcal{S}} \overline{\gamma}_u^{1/\tau} (2^\kappa \pi^{-2} \zeta(2/\tau))^{|u|} \right)^{\tau}  + c \left(\frac{c_2}{N} \sum_{\emptyset \neq u \subseteq \mathcal{S}} \sigma_u^{1/\tau'} (2^\kappa \pi^{-2} \zeta(2/\tau'))^{|u|} \right)^{\tau'}.
\end{equation}

We obtain the following corollary from Lemma~\ref{lem_cheb} and Theorem~\ref{thm1}.

\begin{corollary}\label{cor3}
The generating vector $\boldsymbol{g}^\ast = (g_1^\ast,\ldots, g_s^\ast)$ constructed by the cbc$2$c algorithm satisfies
\begin{equation*}
\mu\left(\boldsymbol{\gamma}: e^2(\mathcal{H}_{\boldsymbol{\gamma}}, P_N(\boldsymbol{g}^\ast)) \le B_{\overline{\boldsymbol{\gamma}}, \boldsymbol{\sigma}}(c,\tau,\tau',N) \mbox{ for all } 1 \le \tau, \tau' < 2 \right) \ge \frac{c^2}{1+c^2} \quad \mbox{for any } c > 0.
\end{equation*}
\end{corollary}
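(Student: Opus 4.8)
The plan is to combine the distributional estimate of Lemma~\ref{lem_cheb} with the deterministic worst-case bounds of Theorem~\ref{thm1}. First I would apply Lemma~\ref{lem_cheb} to the particular generating vector $\boldsymbol{g}^\ast$ produced by the cbc$2$c algorithm, which gives, for any $c>0$,
\[
\mu\Bigl(\boldsymbol{\gamma}: e^2(\mathcal{H}_{\boldsymbol{\gamma}}, P_N(\boldsymbol{g}^\ast)) \le e^2(\mathcal{H}_{\overline{\boldsymbol{\gamma}}}, P_N(\boldsymbol{g}^\ast)) + c\,\mathrm{Std}(e^2(\mathcal{H}_{\boldsymbol{\gamma}},P_N(\boldsymbol{g}^\ast)))\Bigr) \ge \frac{c^2}{1+c^2}.
\]
The crucial observation is that the entire right-hand side of the inequality inside this measure is a fixed, non-random number: both $e^2(\mathcal{H}_{\overline{\boldsymbol{\gamma}}}, P_N(\boldsymbol{g}^\ast))$ and the standard deviation depend only on the fixed parameters $\overline{\boldsymbol{\gamma}}$, $\boldsymbol{\sigma}$ and on the (already constructed) vector $\boldsymbol{g}^\ast$, as is clear from their explicit expressions in terms of the $\overline{\gamma}_u$ and $\sigma_u$.

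Next I would bound this deterministic quantity from above by $B_{\overline{\boldsymbol{\gamma}},\boldsymbol{\sigma}}(c,\tau,\tau',N)$. For the expectation term I invoke the first estimate of Theorem~\ref{thm1}, giving
\[
e^2(\mathcal{H}_{\overline{\boldsymbol{\gamma}}}, P_N(\boldsymbol{g}^\ast)) \le \Bigl(\tfrac{c_1}{N} \sum_{\emptyset \neq u \subseteq \mathcal{S}} \overline{\gamma}_u^{1/\tau} (2^\kappa\pi^{-2}\zeta(2/\tau))^{|u|}\Bigr)^{\tau}
\]
for every $1\le \tau<2$. For the standard deviation term I first apply the Jensen-type inequality established just before Lemma~2, namely $\mathrm{Std}(e^2(\mathcal{H}_{\boldsymbol{\gamma}},P_N(\boldsymbol{g}^\ast))) \le e^2(\mathcal{H}_{\boldsymbol{\sigma}}, P_N(\boldsymbol{g}^\ast))$, and then the second estimate of Theorem~\ref{thm1} to obtain
\[
\mathrm{Std}(e^2(\mathcal{H}_{\boldsymbol{\gamma}},P_N(\boldsymbol{g}^\ast))) \le \Bigl(\tfrac{c_2}{N} \sum_{\emptyset \neq u \subseteq \mathcal{S}} \sigma_u^{1/\tau'} (2^\kappa\pi^{-2}\zeta(2/\tau'))^{|u|}\Bigr)^{\tau'}
\]
for every $1\le \tau'<2$. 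Adding $c$ times the second bound to the first reproduces exactly the definition \eqref{def_B} of $B_{\overline{\boldsymbol{\gamma}},\boldsymbol{\sigma}}(c,\tau,\tau',N)$.

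The step I expect to require the most care is handling the universal quantifier over $\tau$ and $\tau'$ correctly. The event appearing in Lemma~\ref{lem_cheb} does not involve $\tau$ or $\tau'$ at all, while the two bounds of the previous paragraph hold deterministically and simultaneously for every $1\le \tau,\tau'<2$. Writing $A$ for the fixed number $e^2(\mathcal{H}_{\overline{\boldsymbol{\gamma}}}, P_N(\boldsymbol{g}^\ast)) + c\,\mathrm{Std}(e^2(\mathcal{H}_{\boldsymbol{\gamma}},P_N(\boldsymbol{g}^\ast)))$, one has $A\le B_{\overline{\boldsymbol{\gamma}},\boldsymbol{\sigma}}(c,\tau,\tau',N)$ for each admissible pair $(\tau,\tau')$, so the event $\{\boldsymbol{\gamma}: e^2(\mathcal{H}_{\boldsymbol{\gamma}}, P_N(\boldsymbol{g}^\ast)) \le A\}$ is contained in the event $\{\boldsymbol{\gamma}: e^2(\mathcal{H}_{\boldsymbol{\gamma}}, P_N(\boldsymbol{g}^\ast)) \le B_{\overline{\boldsymbol{\gamma}},\boldsymbol{\sigma}}(c,\tau,\tau',N)\text{ for all }1\le\tau,\tau'<2\}$. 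Monotonicity of $\mu$ then transfers the lower bound $\frac{c^2}{1+c^2}$ from the former to the latter, which is precisely the claim. The only subtlety worth flagging is confirming that $\mathrm{Std}$ really is a constant and not itself a random object, so that $A$ is a genuine number and the above containment of events is legitimate; this is immediate from the closed-form expression for the standard deviation in terms of the $\sigma_u$ recorded earlier in the paper.
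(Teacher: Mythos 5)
Your proposal is correct and follows exactly the route the paper intends: the paper derives Corollary~\ref{cor3} by combining Lemma~\ref{lem_cheb} (applied to the deterministic vector $\boldsymbol{g}^\ast$) with the two bounds of Theorem~\ref{thm1}, using the inequality $\mathrm{Std}(e^2(\mathcal{H}_{\boldsymbol{\gamma}},P_N(\boldsymbol{g}^\ast))) \le e^2(\mathcal{H}_{\boldsymbol{\sigma}},P_N(\boldsymbol{g}^\ast))$ to link the standard deviation to the second constraint, which is precisely your chain of estimates. Your explicit treatment of the quantifier over $\tau,\tau'$ and of the event containment fills in details the paper leaves implicit, but the argument is the same.
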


By choosing $\overline{\boldsymbol{\gamma}} = \boldsymbol{\sigma}$, Algorithm~\ref{alg1} can be simplified to the classical cbc algorithm. Thus Corollary~\ref{cor3}, with $\overline{\boldsymbol{\gamma}} = \boldsymbol{\sigma}$, applies to the classical fast cbc algorithm. However, in this case, if $\overline{\gamma}_u$ is small then also $\sigma_u = \overline{\gamma}_u$ is small and thus the lattice rule may be sensitive to changes in the projection $u$. Unfortunately Corollary~\ref{cor3} does not give any information about the set of the weights which satisfy the condition. We study this topic in the following more general setting of the cbc algorithm with $r$ constraints.

\subsection{The fast cbc$r$c algorithm}

In this subsection, instead of two constraints we study a cbc algorithm using $1 \le r \le 2^s-1$ constraints. In this case one needs $r$ sets of weights $\boldsymbol{\gamma}^{(1)}, \ldots, \boldsymbol{\gamma}^{(r)}$ which are linearly independent in $\mathbb{R}^{2^s-1}$ and  $1 \le c_1, \ldots, c_r \le \infty$ such that $c_1^{-1} + \cdots + c_r^{-1} = 1$. As we will see below, adding a vector of weights $\boldsymbol{\gamma}'$ which is a linear combination of the weights $\boldsymbol{\gamma}^{(1)},\ldots, \boldsymbol{\gamma}^{(r)}$ does not add a new constraint, since any vector satisfying the first $r$ constraints automatically satisfies the constraint using the weight $\boldsymbol{\gamma}'$.

\begin{algorithm}[The fast component-by-component $r$ criteria (fast cbc$r$c) algorithm]\label{alg2}
Given:  natural numbers $N, s$ and $1 \le r \le 2^s-1$, nonnegative real vectors $\boldsymbol{\gamma}^{(1)}=(\gamma_u^{(1)})_{\emptyset \neq u \subseteq \mathcal{S}}, \ldots, \boldsymbol{\gamma}^{(r)} = (\gamma_u^{(r)})_{\emptyset \neq u \subseteq \mathcal{S}}$ which are linearly independent in $\mathbb{R}^{2^s-1}$; positive numbers $c_1,\ldots, c_r \in \mathbb{R} \cup \{\infty\}$ which satisfy $c_r \ge  \cdots \ge c_1 \ge 1$ and $c_1^{-1} + \cdots + c_r^{-1} = 1$.
\begin{itemize}
\item Set $K_w =  \min\{\lfloor (N-1) (1-c_w^{-1}) \rfloor + 1, N-1\}$ for $1 \le w \le r$.
\item Set $g_1^\ast = 1$;
\item For $j=2,\ldots, s$ do the following:
\begin{itemize}
\item For $w=1,\ldots, r$ do the following:

Find the set of integers $A_w = \{z^{(w)}_1,\ldots, z^{(w)}_{K_w} \} \subseteq \{1,\ldots, N-1\}$ which satisfies:
\begin{equation*}
e^2(\mathcal{H}_{\boldsymbol{\gamma}^{(w)}}, P_N((g_1^\ast, \ldots, g_{j-1}^\ast, z^{(w)}_i)))  \le e^2(\mathcal{H}_{\boldsymbol{\gamma}^{(w)}}, P_N((g_1^\ast, \ldots, g_{j-1}^\ast, z)))
\end{equation*}
for all $1 \le i \le K_w$ and $z \in \{1,\ldots, N-1\} \setminus A_w$ using the fast algorithm described above.

\item Choose $g_d^\ast \in \bigcap_{1\le w \le r} A_w$ which minimizes $e^2(\mathcal{H}_{\boldsymbol{\gamma}^{(1)}}, P_N((g_1^\ast, \ldots, g_{d-1}^\ast, z)))$ as a function of $z$.
\end{itemize}
\item Return $\boldsymbol{g}^\ast = (g_1^\ast, \ldots, g_s^\ast)$.
\end{itemize}
\end{algorithm}

The considerations above imply therefore that the construction cost of the fast cbc$r$c algorithm is $\mathcal{O}(r s N (\log N))$ operations using $\mathcal{O}(r N)$ storage (note that the intersection step can be done by sorting the elements in $A_1,\ldots, A_r$ first, which takes $\mathcal{O}(N \log N)$ operations).

From \cite[Theorem~10]{DPW08} we also obtain a generalization of Theorem~\ref{thm1} which applies to the cbc$r$c algorithm.

\begin{theorem}\label{thm1b}
Let $N \ge 2$ and $r \ge 1$ be integers and $1 \le c_1, \ldots, c_r \le \infty$ such that $c_1^{-1} + \cdots + c_r^{-1} = 1$. Then the generating vector $\boldsymbol{g}^\ast \in \{1,\ldots, N-1\}^s$ constructed by the cbc$r$c algorithm using the weights $\boldsymbol{\gamma}^{(1)},\ldots, \boldsymbol{\gamma}^{(r)}$ satisfies
\begin{equation*}
e^2(\mathcal{H}_{\boldsymbol{\gamma}^{(w)}}, P_N(\boldsymbol{g}^\ast)) \le \left(\frac{c_w}{N} \sum_{\emptyset \neq u \subseteq \mathcal{S}} (\gamma^{(w)}_u)^{1/\tau} (2^\kappa\pi^{-2} \zeta(2/\tau))^{|u|} \right)^{\tau} \quad \mbox{for all } 1 \le \tau < 2
\end{equation*}
and all $1 \le w \le r$.
\end{theorem}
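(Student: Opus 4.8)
The plan is to argue by induction on the coordinate index $j$, carrying the inductive hypothesis that the prefix $(g_1^\ast,\ldots,g_j^\ast)$ produced by the cbc$r$c algorithm satisfies, for every $1 \le w \le r$ and all $1 \le \tau < 2$, the $j$-dimensional form of the asserted bound (with $\mathcal{S}$ replaced by $\{1,\ldots,j\}$). The base case is the initialisation $g_1^\ast=1$, which is exactly the one analysed in the classical single-weight cbc construction of \cite{NC06,NC06b,DPW08}. The whole content is then to show that the selection step, which picks $g_j^\ast \in \bigcap_{w=1}^r A_w$, can be carried out so that all $r$ bounds are propagated simultaneously.

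For the single-weight ingredient I would invoke (the proof of) \cite[Theorem~10]{DPW08}. Fix $w$ and a prefix $(g_1^\ast,\ldots,g_{j-1}^\ast)$ satisfying the $(j-1)$-dimensional bound for $\boldsymbol{\gamma}^{(w)}$. The Markov/averaging estimate underlying that theorem shows that the set $G_w$ of those $z \in \{1,\ldots,N-1\}$ for which $(g_1^\ast,\ldots,g_{j-1}^\ast,z)$ meets the $j$-dimensional bound for $\boldsymbol{\gamma}^{(w)}$, simultaneously for all $1\le\tau<2$, has cardinality strictly greater than $(N-1)(1-c_w^{-1})$. Since $K_w=\min\{\lfloor (N-1)(1-c_w^{-1})\rfloor+1, N-1\}$, this yields $|G_w| \ge K_w$. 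Now $A_w$ consists of the $K_w$ values of smallest worst-case error $e^2(\mathcal{H}_{\boldsymbol{\gamma}^{(w)}}, P_N((g_1^\ast,\ldots,g_{j-1}^\ast,\cdot)))$; because at least $K_w$ values already lie below the threshold defining $G_w$, the $K_w$-th smallest error is itself below that threshold, so $A_w \subseteq G_w$. Thus \emph{every} element of $A_w$ — not merely the minimiser — keeps the $w$-th bound alive, which is precisely the freedom exploited in Algorithm~\ref{alg2} and noted in comment (iii).

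It remains to see that a single $z$ can serve all $r$ constraints, i.e. that $\bigcap_{w=1}^r A_w \neq \emptyset$. Here I would use $\sum_{w=1}^r(1-c_w^{-1}) = r - \sum_{w=1}^r c_w^{-1} = r-1$. Since $K_w \ge (N-1)(1-c_w^{-1})$ for each $w$, with strict inequality whenever $c_w<\infty$, and since $\sum_w c_w^{-1}=1$ forces at least one $c_w$ to be finite, summing gives $\sum_{w=1}^r K_w > (N-1)(r-1)$. The complement $\{1,\ldots,N-1\}\setminus A_w$ has $(N-1)-K_w$ elements, so the union of the complements has at most $r(N-1)-\sum_{w} K_w < (N-1)$ elements and therefore cannot exhaust $\{1,\ldots,N-1\}$; hence $\bigcap_{w=1}^r A_w \neq \emptyset$. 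Choosing $g_j^\ast$ in this intersection places it in $G_w$ for every $w$, so all $r$ of the $j$-dimensional bounds hold for $(g_1^\ast,\ldots,g_j^\ast)$. This closes the induction, and taking $j=s$ yields the theorem.

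The main obstacle, in my view, is not the intersection count, which is elementary pigeonhole and essentially comment (i) for general $r$, but correctly importing from \cite{DPW08} the \emph{proportion} statement with the uniform-in-$\tau$ threshold: one needs that strictly more than a fraction $1-c_w^{-1}$ of the candidates $z$ satisfy the bound for all $1\le\tau<2$ at once, rather than the weaker fact that the minimiser satisfies it. Care is also required at the boundary case $c_w=\infty$, where $A_w=\{1,\ldots,N-1\}$ imposes no constraint, and in the floor-and-"$+1$" bookkeeping that secures the strict inequality $\sum_w K_w > (N-1)(r-1)$.
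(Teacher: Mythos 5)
Your proposal is correct and takes essentially the same route as the paper, whose (very terse) proof consists of invoking \cite[Theorem~10]{DPW08} for the fact that \emph{every} element of $A_w$ — not just the minimiser — propagates the $w$-th bound uniformly in $\tau$, together with the pigeonhole count $\sum_{w=1}^r K_w > (N-1)(r-1)$ showing $\bigcap_{w=1}^r A_w \neq \emptyset$ (stated as comment (i) for $r=2$). Your write-up merely fills in the induction, the order-statistics containment $A_w \subseteq G_w$, and the $c_w=\infty$ edge case that the paper leaves to the citation.
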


\subsection{The geometry of the cbc$r$c algorithm}\label{sec_geom_alg}

For a $\boldsymbol{x} \in \mathbb{R}^{2^s-1}$ we write $\boldsymbol{x} \le \boldsymbol{y}$ if $x_i \le y_i$ for all $1 \le i < 2^s$, where $\boldsymbol{x} = (x_1,\ldots, x_{2^s-1})$ and $\boldsymbol{y}=(y_1,\ldots, y_{2^s-1})$. Similarly we use the symbols $<, >, \ge$.

For $\boldsymbol{z} = (z_1,\ldots, z_{2^s-1}) \in \mathbb{R}^{2^s-1}$ and $\varepsilon > 0$ we define the simplex
\begin{equation*}
\Gamma(\boldsymbol{z}, \varepsilon)  = \{\boldsymbol{y} \in \mathbb{R}^{2^s-1}: \boldsymbol{y} \ge \boldsymbol{0}, \boldsymbol{y} \cdot \boldsymbol{z} \le \varepsilon\}.
\end{equation*}
This simplex has vertices $\boldsymbol{0}$ and $(\boldsymbol{0}, \varepsilon/z_i)$, $1 \le i < 2^s$, which stands for the vector whose $i'$th component is $0$ for $i \neq i'$ and whose $i$th component is $z_i$. If $z_i = 0$ for some component $i$, then the simplex is degenerate and we consider the projection of the set onto those components which are nonzero (which is then a nondegenerate simplex).

Let $\boldsymbol{g} = (g_1,\ldots, g_s)$, $\boldsymbol{g}_u = (g_i)_{i \in u}$ and
\begin{align*}
e_u & = e_u(\boldsymbol{g}_u) = \frac{1}{N} \sum_{n=0}^{N-1} \prod_{i \in u} B_2(\{n g_i/N\}) \quad \mbox{for } \emptyset \neq u \subseteq \mathcal{S}, \\
\boldsymbol{e} & = \boldsymbol{e}(\boldsymbol{g}) = (e_u)_{\emptyset \neq u \subseteq \mathcal{S}}, \\
X_{\boldsymbol{\gamma},c} & = X_{\boldsymbol{\gamma},c}(N) = \inf_{1\le \tau < 2} \left(\frac{c}{N} \sum_{\emptyset \neq u \subseteq \mathcal{S}} (\gamma_u)^{1/\tau} (2 \pi^{-2} \zeta(2/\tau))^{|u|} \right)^\tau.
\end{align*}
Notice that since $e_u$ is the worst-case error of integration in a reproducing kernel Hilbert space, we have $e_u \ge 0$ and therefore $\boldsymbol{e} \ge \boldsymbol{0}$.

Theorem~\ref{thm1b} implies that the cbc$r$c algorithm now chooses the generating vector $\boldsymbol{g} \in \{1,\ldots, N-1\}^s$ such that
\begin{equation*}
\boldsymbol{\gamma}^{(w)} \cdot \boldsymbol{e} := (\gamma^{(w)}_u) \cdot (e_u) : = \sum_{\emptyset \neq u \subseteq \mathcal{S}} \gamma^{(w)}_u e_u \le X_{\boldsymbol{\gamma}^{(w)}, c_w} \quad \mbox{for } 1 \le w \le r.
\end{equation*}
Thus $\boldsymbol{e}$ lies in the intersection of the simplices $\Gamma(\boldsymbol{\gamma}^{(w)}, X_{\boldsymbol{\gamma}^{(w)},c_w})$ for $1 \le w \le r$:
\begin{equation}\label{eq_simplex}
\boldsymbol{e} \in \bigcap_{1 \le w \le r} \Gamma(\boldsymbol{\gamma}^{(w)}, X_{\boldsymbol{\gamma}^{(w)},c_w}).
\end{equation}
Geometrically this means that $\boldsymbol{e}$ lies in a convex $r$-polytope given by the intersection of $r$ simplices. The weights $\boldsymbol{\gamma}^{(w)}$ change the shape of the simplices, whereas the values $X_{\boldsymbol{\gamma}^{(w)}, c_w}$ change the size of the simplices.

Compared to the cbc algorithm, the component-by-component $r$ criteria (cbc$r$c) algorithm first increases the original simplex (by at most a factor of $c_r$) and then intersect it with other simplices, see Figure~\ref{fig1}. This can be used to prevent $\boldsymbol{e}$ to be chosen too close to a vertex of the original simplex (this prevents $e_u$ from becoming too large for some $\emptyset \neq u \subseteq \mathcal{S}$).

\begin{figure}[ht]
\begin{center} 
\includegraphics[scale= 0.5]{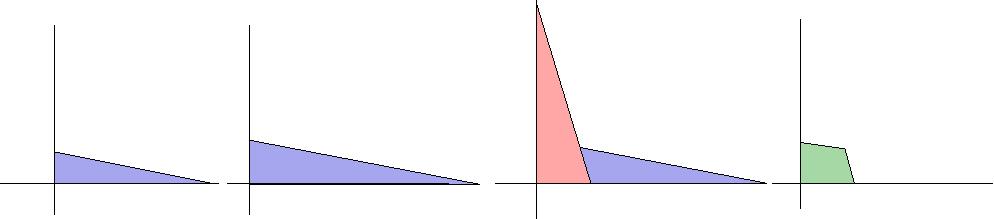}
\caption{\label{fig1} The figure shows the search domains. The classical cbc algorithm searches in a simplex as indicated in the left picture. The cbc2c algorithm first increases the size of the simplex (second picture) and then intersects it with another simplex (third picture) to get the new search domain (fourth picture). This way extreme choices near the vertex of the original simplex (which corresponds to a large value of $e_u$ for some $u$) can be avoided.}
\end{center}
\end{figure}

\subsection{The geometry of the weights}

We now study the geometry of the weights for which the corresponding square worst-case error satisfies a certain bound. Since $\boldsymbol{e}$ is fixed once a generating vector $\boldsymbol{g}^\ast$ is chosen, we consider now the set of weights
\begin{equation*}
\Gamma = \left\{(\gamma_u)_{\emptyset \neq u \subseteq \mathcal{S}}: e^2(\mathcal{H}_{\boldsymbol{\gamma}}, P_N(\boldsymbol{g}^\ast)) \le \varepsilon e^2(\mathcal{H}_{\boldsymbol{\gamma}}, P_0) \right\},
\end{equation*}
where $\varepsilon > 0$ is a real number, $P_0 = \emptyset$ and
\begin{equation*}
e^2(\mathcal{H}_{\boldsymbol{\gamma}}, P_0) = \inf_{\satop{f \in \mathcal{H}_{\boldsymbol{\gamma}}}{\|f\|_{\mathcal{H}_{\boldsymbol{\gamma}} \le 1}}} \left|\int_{[0,1]^s} f(\boldsymbol{x}) \,\mathrm{d} \boldsymbol{x} \right|
\end{equation*}
is the initial error \cite{SW98}. For our space we have $e^2(\mathcal{H}_{\boldsymbol{\gamma}}, P_0) = 1$.

The square worst-case error $e^2(\mathcal{H}_{\boldsymbol{\gamma}}, P_N(\boldsymbol{g}^\ast)) = \boldsymbol{e} \cdot \boldsymbol{\gamma}$ is a linear function of $\gamma_u$. Thus $\Gamma$ is a simplex in $\mathbb{R}^{2^s-1}$ given by
\begin{equation*}
\Gamma = \Gamma(\boldsymbol{e}, \varepsilon),
\end{equation*}
which has vertices $(0)_{\emptyset \neq u \subseteq \mathcal{S}}$ and $$\left(\boldsymbol{0}_{\mathcal{S}\setminus u},  e_u^{-1} \varepsilon \right) \quad \mbox{for } \emptyset \neq u \subseteq \mathcal{S}.$$

\begin{figure}[ht]
\begin{center} 
\includegraphics[scale= 0.8]{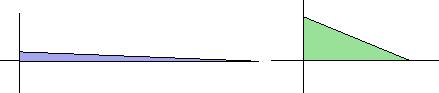}
\caption{\label{fig2} The set of weights $\Gamma$. In the first instance, one value of $e_u$ (corresponding to the $y$-axis in the picture) is large, therefore the algorithm is sensitive to changes with respect to the projection $u$ (the corresponding weight $\gamma_u$ has to be small). A small change in $\gamma_u$ could significantly increase the error. In the second case the algorithm is more robust since bigger changes in the second coordinate are allowed.}
\end{center}
\end{figure}

We consider now the set of weights for which the cbc$r$c algorithm yields bounds. We have the following result.
\begin{theorem}\label{thm2}
Let $\boldsymbol{g}^\ast$ be constructed by the cbc$r$c algorithm using the weights $\boldsymbol{\gamma}^{(1)},\ldots, \boldsymbol{\gamma}^{(r)}$. Let $\boldsymbol{\gamma} = \sum_{w=1}^r \lambda_w \boldsymbol{\gamma}^{(w)}$ for some $\lambda_1,\ldots, \lambda_r \ge 0$. Then it follows that
\begin{equation*}
e^2(\mathcal{H}_{\boldsymbol{\gamma}}, P_N(\boldsymbol{g}^\ast)) \le \sum_{w=1}^{r} \lambda_w X_{\boldsymbol{\gamma}^{(w)}, c_w}.
\end{equation*}
\end{theorem}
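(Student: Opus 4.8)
The plan is to reduce the statement to the linearity of the squared worst-case error in the weight vector, combined with the per-weight bounds already furnished by Theorem~\ref{thm1b}. The essential observation is that, once the generating vector $\boldsymbol{g}^\ast$ is fixed, \eqref{eq_wce} exhibits $e^2(\mathcal{H}_{\boldsymbol{\gamma}}, P_N(\boldsymbol{g}^\ast))$ as the inner product $\boldsymbol{\gamma} \cdot \boldsymbol{e}$, where $\boldsymbol{e} = \boldsymbol{e}(\boldsymbol{g}^\ast) = (e_u)_{\emptyset \neq u \subseteq \mathcal{S}}$ depends only on $\boldsymbol{g}^\ast$ and satisfies $\boldsymbol{e} \ge \boldsymbol{0}$. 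Thus the error is a linear functional of $\boldsymbol{\gamma}$, and this is the only structural fact the proof needs.

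First I would substitute $\boldsymbol{\gamma} = \sum_{w=1}^r \lambda_w \boldsymbol{\gamma}^{(w)}$ into this inner product and use bilinearity of the dot product to write
\[
e^2(\mathcal{H}_{\boldsymbol{\gamma}}, P_N(\boldsymbol{g}^\ast)) = \Big(\sum_{w=1}^r \lambda_w \boldsymbol{\gamma}^{(w)}\Big) \cdot \boldsymbol{e} = \sum_{w=1}^r \lambda_w\, \boldsymbol{\gamma}^{(w)} \cdot \boldsymbol{e} = \sum_{w=1}^r \lambda_w\, e^2(\mathcal{H}_{\boldsymbol{\gamma}^{(w)}}, P_N(\boldsymbol{g}^\ast)).
\]
Next I would invoke the bound attached to each constraint. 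Since $\boldsymbol{g}^\ast$ was produced by the cbc$r$c algorithm, Theorem~\ref{thm1b} (equivalently the simplex inclusion \eqref{eq_simplex}) gives $\boldsymbol{\gamma}^{(w)} \cdot \boldsymbol{e} = e^2(\mathcal{H}_{\boldsymbol{\gamma}^{(w)}}, P_N(\boldsymbol{g}^\ast)) \le X_{\boldsymbol{\gamma}^{(w)}, c_w}$ for every $1 \le w \le r$. Because each $\lambda_w \ge 0$, multiplying these bounds by $\lambda_w$ preserves the inequality, and summing over $w$ yields the claimed estimate $\sum_{w=1}^r \lambda_w X_{\boldsymbol{\gamma}^{(w)}, c_w}$.

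There is no real obstacle here: the result is a one-line consequence of linearity once the per-weight bounds of Theorem~\ref{thm1b} are in hand. The only point demanding care is the sign condition $\lambda_w \ge 0$, which is exactly what makes the nonnegative-combination bound additive and what confines the conclusion to the nonnegative cone spanned by $\boldsymbol{\gamma}^{(1)},\ldots,\boldsymbol{\gamma}^{(r)}$, matching the convex-polytope picture of Section~\ref{sec_geom_alg}. I would also remark that linear independence of the $\boldsymbol{\gamma}^{(w)}$ plays no role in the inequality itself (it is invoked elsewhere only to guarantee that the $r$ constraints are genuinely distinct), so the estimate in fact holds for any nonnegative linear combination of the chosen weight vectors.
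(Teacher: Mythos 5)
Your proof is correct and is essentially identical to the paper's own argument: both write $e^2(\mathcal{H}_{\boldsymbol{\gamma}}, P_N(\boldsymbol{g}^\ast)) = \boldsymbol{e} \cdot \boldsymbol{\gamma}$, expand by linearity into $\sum_{w=1}^r \lambda_w \, \boldsymbol{e} \cdot \boldsymbol{\gamma}^{(w)}$, and then apply the per-constraint bounds $\boldsymbol{e} \cdot \boldsymbol{\gamma}^{(w)} \le X_{\boldsymbol{\gamma}^{(w)}, c_w}$ guaranteed by the cbc$r$c construction (Theorem~\ref{thm1b}), using $\lambda_w \ge 0$ to sum the inequalities. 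Your closing remark that linear independence of the $\boldsymbol{\gamma}^{(w)}$ is not needed for this inequality is also accurate and consistent with the paper's discussion.
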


\begin{proof}
By the cbc$r$c algorithm we have $e^2(\mathcal{H}_{\boldsymbol{\gamma}^{(w)}}, P_N(\boldsymbol{g}^\ast)) = \boldsymbol{e} \cdot \boldsymbol{\gamma}^{(w)} \le X_{\boldsymbol{\gamma}^{(w)},c_w}$ for $1 \le w \le r$. Thus we have
\begin{equation*}
e^2(\mathcal{H}_{\boldsymbol{\gamma}}, P_N(\boldsymbol{g}^\ast)) = \boldsymbol{e} \cdot \boldsymbol{\gamma} = \sum_{w=1}^r \lambda_w \boldsymbol{e} \cdot \boldsymbol{\gamma}^{(w)} = \sum_{w=1}^r \lambda_w e^2(\mathcal{H}_{\boldsymbol{\gamma}^{(w)}}, P_N(\boldsymbol{g}^\ast))  \le \sum_{w=1}^r \lambda_w X_{\boldsymbol{\gamma}^{(w)},c_w},
\end{equation*}
which shows the result.
\end{proof}
The above theorem shows that the cbc$r$c algorithm yields lattice rules which simultaneously satisfy bounds for weights $\boldsymbol{\gamma} \in \mathbb{R}^{2^s-1}$ taken from a subspace of $\mathbb{R}^{2^s-1}$ spanned by $\boldsymbol{\gamma}^{(1)},\ldots, \boldsymbol{\gamma}^{(r)} \in \mathbb{R}^{2^s-1}$.

The theorem above can be understood geometrically in the following way. Note that $\boldsymbol{e} \in \Gamma(\boldsymbol{\gamma}^{(w)},X_{\boldsymbol{\gamma}^{(w)}, c_w})$ if and only if $\boldsymbol{\gamma}^{(w)} \in \Gamma(\boldsymbol{e},X_{\boldsymbol{\gamma}^{(w)},c_w})$. We define the new weights $\widehat{\boldsymbol{\gamma}}^{(w)} = \varepsilon X_{\boldsymbol{\gamma}^{(w)},c_w}^{-1} \boldsymbol{\gamma}^{(w)}$, $1 \le w \le r$ for some real number $\varepsilon > 0$. Then $X_{\widehat{\boldsymbol{\gamma}}^{(w)},c_w} = \varepsilon$ for all $1 \le w \le r$. Note that criterion in the cbc$r$c algorithm does not change by this normalization since the cbc$r$c algorithm yields exactly the same generating vector $\boldsymbol{g}^\ast$ using the weights $\boldsymbol{\gamma}^{(1)},\ldots, \boldsymbol{\gamma}^{(r)}$ as it does for using the weights $\widehat{\boldsymbol{\gamma}}^{(1)}, \ldots, \widehat{\boldsymbol{\gamma}}^{(r)}$. Thus
\begin{equation*}
\boldsymbol{e} \in \Gamma(\boldsymbol{\gamma}^{(w)},X_{\boldsymbol{\gamma}^{(w)}, c_w}) \Leftrightarrow \boldsymbol{\gamma}^{(w)} \in \Gamma(\boldsymbol{e},X_{\boldsymbol{\gamma}^{(w)},c_w}) \Leftrightarrow \widehat{\boldsymbol{\gamma}}^{(w)} \in \Gamma(\boldsymbol{e}, \varepsilon).
\end{equation*}
Now \eqref{eq_simplex} is equivalent to
\begin{equation*}
\boldsymbol{e} \in \bigcap_{1 \le w \le r} \Gamma(\boldsymbol{\gamma}^{(w)}, X_{\boldsymbol{\gamma}^{(w)},c_w}) \Leftrightarrow \widehat{\boldsymbol{\gamma}}^{(w)} \in \Gamma(\boldsymbol{e}, \varepsilon) \quad \mbox{for all } 1 \le w \le r.
\end{equation*}
Therefore, the cbc$r$c algorithm ensures that $\boldsymbol{g}^\ast$ is chosen such that $\widehat{\boldsymbol{\gamma}}^{(1)},\ldots, \widehat{\boldsymbol{\gamma}}^{(r)}$ all lie in the simplex $\Gamma(\boldsymbol{e}, \varepsilon )$, i.e. $$\widehat{\boldsymbol{\gamma}}^{(1)},\ldots, \widehat{\boldsymbol{\gamma}}^{(r)} \in \Gamma(\boldsymbol{e}, \varepsilon ).$$ In fact, the cbc$r$c algorithm finds, component-by-component, the smallest simplex $\Gamma(\boldsymbol{e},\varepsilon)$ which contains $\widehat{\boldsymbol{\gamma}}^{(1)},\ldots, \widehat{\boldsymbol{\gamma}}^{(r)}$.

If one chooses $r=2^s-1$ in the cbc$r$c algorithm, then the weights $\widehat{\boldsymbol{\gamma}}^{(1)},\ldots, \widehat{\boldsymbol{\gamma}}^{(2^s-1)} \in \Gamma(\boldsymbol{e}, \varepsilon)$ themselves are the vertices of a $2^s-1$-dimensional convex polytope which is contained in the simplex $\Gamma(\boldsymbol{e}, \varepsilon)$ of the same dimension. However, if $1 \le r < 2^s-1$, then the convex polytope spanned by $\widehat{\boldsymbol{\gamma}}^{(1)},\ldots, \widehat{\boldsymbol{\gamma}}^{(r)}$ is degenerate since it lies in a $r$-dimensional subspace. Thus using only $r < 2^s-1$ weights $\boldsymbol{\gamma}^{(1)}, \ldots, \boldsymbol{\gamma}^{(r)}$ does not fully control the shape of the simplex $\Gamma(\boldsymbol{e}, \varepsilon)$. For the classical cbc algorithm only one vector of weights $\boldsymbol{\gamma}$ is used. The cbc construction then only ensures that $\widehat{\boldsymbol{\gamma}} \in \Gamma(\boldsymbol{e}, \varepsilon )$. In the numerical examples below we show that it is possible for the classical cbc construction to choose generating vectors which are not suitable for many other choices of weights. By adding additional constraints, the cbc$r$c algorithm can prevent such bad choices.

The bound in Theorem~\ref{thm2} applies for all weights which lie in the linear subspace of $\mathbb{R}^{2^s-1}$ spanned by vectors $\boldsymbol{\gamma}^{(1)},\ldots, \boldsymbol{\gamma}^{(r)}$. In particular, if one uses the cbc$(2^s-1)$c algorithm, then one can obtain a bound for any choice of weights. However, in higher dimensions $s$ this is currently problematic for two reasons: the computational cost is exponential in the dimension; the second problem is that one would have to choose $c_1,\ldots, c_{2^s-1}$ such that $c_1^{-1} + \cdots + c_{2^s-1}^{-1} = 1$. For instance, the choice $c_j = 2^{s}-1$, $1 \le j \le s$, yields a factor in the upper bound which grows exponentially with the dimension. For lower dimensions this is feasible though and hence can be useful in applications with low truncation dimension.

Consider now $\boldsymbol{\gamma} = \sum_{w=1}^r \lambda_w \widehat{\boldsymbol{\gamma}}^{(w)}$, where $\lambda_1,\ldots, \lambda_r \ge 0$ and $\lambda_1 + \cdots + \lambda_r \le 1$, i.e., $\boldsymbol{\gamma}$ lies in the convex polytope with vertices $\boldsymbol{0}, \widehat{\boldsymbol{\gamma}}^{(1)}, \ldots, \widehat{\boldsymbol{\gamma}}^{(r)}$.Then, by Theorem~\ref{thm2}, we have $$e^2(\mathcal{H}_{\boldsymbol{\gamma}}, P_N(\boldsymbol{g}^\ast)) \le \varepsilon.$$ We summarize the results in the following corollary.

\begin{corollary}
Let $1 \le r \le 2^s-1$. Let $c_1,\ldots, c_r \ge 1$ be given such that $c_1^{-1} + \cdots + c_{r}^{-1} = 1$. Let $\varepsilon > 0$ be a real number. Let $\widehat{\boldsymbol{\gamma}}^{(1)},\ldots, \widehat{\boldsymbol{\gamma}}^{(r)}$ be weights which are normalized such that $X_{\widehat{\boldsymbol{\gamma}}^{(w)}, c_w} = \varepsilon$ for all $1 \le w \le r$. Let $\boldsymbol{\gamma} = \sum_{w=1}^r \lambda_w \widehat{\boldsymbol{\gamma}}^{(w)}$ where $\lambda_1,\ldots, \lambda_r \ge 0$ and $\lambda_1 + \cdots + \lambda_r \le 1$. Let $\boldsymbol{g}^\ast$ be constructed by the cbc$r$c algorithm based on the weights $\widehat{\boldsymbol{\gamma}}^{(1)},\ldots, \widehat{\boldsymbol{\gamma}}^{(r)}$. Then
\begin{equation*}
\widehat{\boldsymbol{\gamma}}^{(1)},\ldots, \widehat{\boldsymbol{\gamma}}^{(r)} \in \Gamma(\boldsymbol{e}, \varepsilon)
\end{equation*}
and
\begin{equation*}
e^2(\mathcal{H}_{\boldsymbol{\gamma}}, P_N(\boldsymbol{g}^\ast)) \le \varepsilon.
\end{equation*}
\end{corollary}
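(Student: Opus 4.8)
The plan is to read off both assertions as immediate specializations of Theorem~\ref{thm1b} and Theorem~\ref{thm2}, the only genuinely new inputs being the normalizing hypotheses $X_{\widehat{\boldsymbol{\gamma}}^{(w)},c_w}=\varepsilon$ and the convexity constraint $\lambda_1+\cdots+\lambda_r\le 1$. Since the generating vector $\boldsymbol{g}^\ast$ is, by hypothesis, the output of the cbc$r$c algorithm run on the weights $\widehat{\boldsymbol{\gamma}}^{(1)},\ldots,\widehat{\boldsymbol{\gamma}}^{(r)}$ themselves, I would simply apply the two theorems verbatim with each $\boldsymbol{\gamma}^{(w)}$ replaced by $\widehat{\boldsymbol{\gamma}}^{(w)}$; no rescaling argument is needed, as the normalized weights are already the ones driving the construction.

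For the first assertion I would invoke Theorem~\ref{thm1b} and take the infimum over $1\le\tau<2$ of its right-hand side, which gives, for each $1\le w\le r$,
\begin{equation*}
\widehat{\boldsymbol{\gamma}}^{(w)}\cdot\boldsymbol{e} = e^2(\mathcal{H}_{\widehat{\boldsymbol{\gamma}}^{(w)}},P_N(\boldsymbol{g}^\ast)) \le X_{\widehat{\boldsymbol{\gamma}}^{(w)},c_w} = \varepsilon,
\end{equation*}
where the first equality is the linearity identity $e^2(\mathcal{H}_{\boldsymbol{\gamma}},P_N(\boldsymbol{g}^\ast))=\boldsymbol{e}\cdot\boldsymbol{\gamma}$ recorded in Section~\ref{sec_geom_alg}, the inequality is Theorem~\ref{thm1b} after the infimum over $\tau$, and the last step is the normalizing hypothesis. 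Combining $\widehat{\boldsymbol{\gamma}}^{(w)}\cdot\boldsymbol{e}\le\varepsilon$ with $\widehat{\boldsymbol{\gamma}}^{(w)}\ge\boldsymbol{0}$ and the definition $\Gamma(\boldsymbol{e},\varepsilon)=\{\boldsymbol{y}\ge\boldsymbol{0}:\boldsymbol{y}\cdot\boldsymbol{e}\le\varepsilon\}$ shows $\widehat{\boldsymbol{\gamma}}^{(w)}\in\Gamma(\boldsymbol{e},\varepsilon)$ for every $w$, which is the first claim.

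For the second assertion I would apply Theorem~\ref{thm2} to $\boldsymbol{\gamma}=\sum_{w=1}^r\lambda_w\widehat{\boldsymbol{\gamma}}^{(w)}$ with $\lambda_w\ge 0$, obtaining
\begin{equation*}
e^2(\mathcal{H}_{\boldsymbol{\gamma}},P_N(\boldsymbol{g}^\ast)) \le \sum_{w=1}^r\lambda_w X_{\widehat{\boldsymbol{\gamma}}^{(w)},c_w} = \varepsilon\sum_{w=1}^r\lambda_w \le \varepsilon,
\end{equation*}
where the equality again uses the uniform normalization $X_{\widehat{\boldsymbol{\gamma}}^{(w)},c_w}=\varepsilon$ and the final inequality uses $\lambda_1+\cdots+\lambda_r\le 1$ together with $\varepsilon>0$. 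This establishes the second claim.

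I do not anticipate any real obstacle here: the statement is essentially a bookkeeping consequence of the two theorems. The only points meriting a line of justification are that the normalized weights form an admissible (nonnegative, linearly independent) input to Algorithm~\ref{alg2}, so that Theorems~\ref{thm1b} and~\ref{thm2} apply, and that the substitution $X_{\widehat{\boldsymbol{\gamma}}^{(w)},c_w}=\varepsilon$ may be carried out uniformly in $w$; both are immediate from the hypotheses.
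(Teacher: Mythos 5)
Your proposal is correct and takes essentially the same route as the paper: the paper's (implicit) proof is exactly the combination of Theorem~\ref{thm1b} giving $\boldsymbol{e}\cdot\widehat{\boldsymbol{\gamma}}^{(w)}\le X_{\widehat{\boldsymbol{\gamma}}^{(w)},c_w}=\varepsilon$ (hence $\widehat{\boldsymbol{\gamma}}^{(w)}\in\Gamma(\boldsymbol{e},\varepsilon)$) with Theorem~\ref{thm2} and the normalization, yielding $e^2(\mathcal{H}_{\boldsymbol{\gamma}},P_N(\boldsymbol{g}^\ast))\le\varepsilon\sum_{w}\lambda_w\le\varepsilon$. The only cosmetic difference is that the paper reaches the corollary through the preceding rescaling discussion, whereas you apply the two theorems directly to the already-normalized weights, which the hypotheses indeed license.
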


\section{Numerical results}\label{sec_num}

To illustrate the ideas in the paper we chose some instructive examples. We tested the algorithm with $r=2$. The computation time for the cbc$2$c algorithm is between $2$ and $3$ times the computation time of the cbc$1$c algorithm. This was observed for a variety of choices for $c_1$ and $c_2$ and values $n$.

Further we tested the component-by-component algorithm with fast decaying weights. We used  $s=100$, product weights $\gamma_{u}^{(w)} = \prod_{j \in u} \widehat{\gamma}_j^{(w)}$ with $\widehat{\gamma}^{(1)}_j= 1$, $\widehat{\gamma}_j^{(2)} = 10^{-j}$ and $\widehat{\gamma}_j^{(3)} = j^{-1}$. Further we chose $c_1=c_2=2$. The cbc$1$c algorithm using the weights $\boldsymbol{\gamma}^{(1)}$ returns the same components $g_j^{\ast} = g_{j'}^\ast$ for $j, j' \ge 15$. This choice of generating vector would be bad for slow decaying weights. On the other hand, the cbc$2$c does not return any repeated components, which is prevented by the second constraint. The results are presented in Table~\ref{table1}. It shows that the cbc$2$c algorithm yields approximately the same results as the cbc$1$c algorithm constructed for the right weight, but can do significantly better if the lattice rule constructed by the cbc$1$c algorithm is used for different weights, as can be seen in Table~\ref{table1}. The results in Table~\ref{table2} are for a different choice of weights and are similar.

The choice of weights $\widehat{\gamma}_j^{(1)}$ and $\widehat{\gamma}_j^{(2)}$ in Table~\ref{table1} and Table~\ref{table2} are quite different from each other. Numerical tests for examples where the weights are more similar than in the examples shown, for instance $\widehat{\gamma}_j^{(1)} = j^{-1}$ and $\widehat{\gamma}_j^{(2)} = j^{-4}$ (or even $\widehat{\gamma}_j^{(2)} = 2^{-j}$), yield numerical results which are quite similar, indicating that there are not many (bad) outliers. Table~\ref{table3} and \ref{table4} show numerical results with randomly chosen weights, again showing that the lattice rules constructed by the cbc and cbc$2$c algorithms perform well except in the case where the cbc construction is based on fast decaying weights.

\begin{table}
\begin{tabular}{r||r|r|r|r|r}
  N & 251 & 509  & 1019 & 2039 & 4079  \\ \hline
  cbc$2$c: $e(\boldsymbol{\gamma}^{(1)})$ & 1.4044e+02 &9.8623e+01 & 6.9702e+01&4.9275e+01 & 3.4838e+01 \\
  cbc($\boldsymbol{\gamma}^{(1)}$): $e(\boldsymbol{\gamma}^{(1)})$ & 1.4044e+02&9.8623e+01 & 6.9702e+01& 4.9274e+01& 3.4838e+01 \\
  cbc($\boldsymbol{\gamma}^{(2)}$): $e(\boldsymbol{\gamma}^{(1)})$ & 2.4075e+02 & 2.1790e+02 & 1.9762e+02 & 1.9481e+02 &  1.8137e+02 \\ \hline
  cbc$2$c: $e(\boldsymbol{\gamma}^{(2)})$ & 5.4897e-04 & 2.7128e-04 & 1.3568e-04 & 6.7927e-05 & 3.3965e-05 \\
  cbc($\boldsymbol{\gamma}^{(1)}$): $e(\boldsymbol{\gamma}^{(2)})$ & 5.4897e-04 & 2.7128e-04 & 1.3568e-04 &6.7930e-05  & 3.3966e-05  \\
  cbc($\boldsymbol{\gamma}^{(2)}$): $e(\boldsymbol{\gamma}^{(2)})$ & 5.4882e-04 & 2.7113e-04 & 1.3558e-04 & 6.7892e-05 &  3.3954e-05  \\ \hline
  cbc$2$c: $e(\boldsymbol{\gamma}^{(3)})$ & 3.1971e-02 & 1.9872e-02 &1.2057e-02  & 7.7449e-03 &4.9349e-03  \\
  cbc($\boldsymbol{\gamma}^{(1)}$): $e(\boldsymbol{\gamma}^{(3)})$ & 3.1653e-02 & 2.0250e-02 & 1.3100e-02 & 8.5062e-03 & 4.9972e-03  \\
  cbc($\boldsymbol{\gamma}^{(2)}$): $e(\boldsymbol{\gamma}^{(3)})$ & 1.5597e-01 & 1.4616e-01 & 1.3816e-01 & 1.3813e-01 &  1.3130e-01 \\
  \hline\hline
  N & 8161 & 16319 & 32633 & 65267 & 130531  \\ \hline
  cbc$2$c: $e(\boldsymbol{\gamma}^{(1)})$  & 2.4629e+01 & 1.7417e+01 & 1.2316e+01 & 8.7088e+00 & 6.1579e+00 \\
  cbc($\boldsymbol{\gamma}^{(1)}$): $e(\boldsymbol{\gamma}^{(1)})$  & 2.4629e+01 & 1.7417e+01 & 1.2316e+01 & 8.7087e+00 & 6.1579e+00 \\
  cbc($\boldsymbol{\gamma}^{(2)}$): $e(\boldsymbol{\gamma}^{(1)})$ & 1.6777e+02 & 1.6737e+02 & 1.5575e+02 & 1.5567e+02 & 1.5563e+02 \\ \hline
  cbc$2$c: $e(\boldsymbol{\gamma}^{(2)})$  & 1.7023e-05 & 8.5236e-06 & 4.2695e-06 & 2.1370e-06 & 1.0753e-06 \\
  cbc($\boldsymbol{\gamma}^{(1)}$): $e(\boldsymbol{\gamma}^{(2)})$ & 1.7023e-05 & 8.5236e-06 & 4.2690e-06 & 2.1370e-06 & 1.0721e-06  \\
  cbc($\boldsymbol{\gamma}^{(2)}$): $e(\boldsymbol{\gamma}^{(2)})$  & 1.7006e-05 & 8.5111e-06 & 4.2631e-06 & 2.1351e-06 & 1.0683e-06 \\ \hline
  cbc$2$c: $e(\boldsymbol{\gamma}^{(3)})$ & 3.0911e-03 & 2.0308e-03 & 1.2551e-03 & 7.9994e-04 & 5.2220e-04 \\
  cbc($\boldsymbol{\gamma}^{(1)}$): $e(\boldsymbol{\gamma}^{(3)})$ & 3.2965e-03 & 2.1254e-03 & 1.3246e-03 & 8.5575e-04 & 5.8758e-04  \\
  cbc($\boldsymbol{\gamma}^{(2)}$): $e(\boldsymbol{\gamma}^{(3)})$ & 1.2522e-01 & 1.2521e-01 & 1.1979e-01 & 1.1979e-01 &  1.1979e-01 \\
  \hline\hline
  N & 261061 & 522127 & 1044257 &  2088511 & 4177051  \\ \hline
  cbc$2$c: $e(\boldsymbol{\gamma}^{(1)})$  & 4.3542e+00 & 3.0787e+00 & 2.1769e+00 & 1.5392e+00 &  1.0883e+00 \\
  cbc($\boldsymbol{\gamma}^{(1)}$): $e(\boldsymbol{\gamma}^{(1)})$  & 4.3542e+00 & 3.0787e+00 & 2.1769e+00 & 1.5392e+00 & 1.0883e+00 \\
  cbc($\boldsymbol{\gamma}^{(2)}$): $e(\boldsymbol{\gamma}^{(1)})$ & 1.3495e+02 & 1.1735e+02 & 1.2566e+02 &  1.1702e+02 &  1.1701e+02 \\ \hline
  cbc$2$c: $e(\boldsymbol{\gamma}^{(2)})$  & 5.3706e-07 & 2.7758e-07 & 1.3899e-07 & 6.3220e-08 & 0.0000e+00  \\
  cbc($\boldsymbol{\gamma}^{(1)}$): $e(\boldsymbol{\gamma}^{(2)})$ & 5.3686e-07 & 2.5938e-07 & 1.3899e-07 & 0.0000e+00 &  7.3000e-08 \\
  cbc($\boldsymbol{\gamma}^{(2)}$): $e(\boldsymbol{\gamma}^{(2)})$  & 5.3541e-07 & 2.5981e-07 & 1.3411e-07 & 1.1151e-07 & 1.6255e-07 \\ \hline
  cbc$2$c: $e(\boldsymbol{\gamma}^{(3)})$  & 3.2756e-04 & 2.1752e-04 & 1.4107e-04 & 8.6973e-05 & 5.7966e-05 \\
  cbc($\boldsymbol{\gamma}^{(1)}$): $e(\boldsymbol{\gamma}^{(3)})$ & 4.2678e-04 & 2.1369e-04 & 1.3647e-04 & 9.4174e-05 & 5.7208e-05  \\
  cbc($\boldsymbol{\gamma}^{(2)}$): $e(\boldsymbol{\gamma}^{(3)})$ & 1.1070e-01 & 1.0250e-01 & 1.0620e-01 & 1.0234e-01 &  1.0234e-01 
\end{tabular}
\caption{This table shows the square worst-case errors using the cbc$2$c construction based on the weights $\boldsymbol{\gamma}^{(1)}$ and $\boldsymbol{\gamma}^{(2)}$, the cbc construction based on the weights $\boldsymbol{\gamma}^{(1)}$ and the cbc construction based on the weights $\boldsymbol{\gamma}^{(2)}$. Here, $e(\boldsymbol{\gamma}^{(w)})$ stands for the worst-case error $e(\mathcal{H}_{\boldsymbol{\gamma}^{(w)}}, P_N(\boldsymbol{g}^\ast))$. We choose $s = 100$, product weights with $\widehat{\gamma}^{(1)}_j = 1$, $\widehat{\gamma}_j^{(2)}=10^{-j}$, $\widehat{\gamma}_j^{(3)} = j^{-1}$, $c_1=c_2=2$;}\label{table1}
\end{table}

\begin{table}
\begin{tabular}{r||r|r|r|r|r}
  N & 251 & 509  & 1019 & 2039 & 4079  \\ \hline
  cbc$2$c: $e(\boldsymbol{\gamma}^{(1)})$ & 3.9170e-03 & 2.0603e-03 & 1.1145e-03 & 6.0564e-04 &  3.2079e-04 \\
  cbc($\boldsymbol{\gamma}^{(1)}$): $e(\boldsymbol{\gamma}^{(1)})$ & 3.9148e-03& 2.0587e-03 & 1.1142e-03& 6.0547e-04& 3.2073e-04 \\
  cbc($\boldsymbol{\gamma}^{(2)}$): $e(\boldsymbol{\gamma}^{(1)})$ & 9.5094e-03 & 7.6012e-03 & 6.4146e-03 &6.3616e-03  &  5.5514e-03 \\ \hline
  cbc$2$c: $e(\boldsymbol{\gamma}^{(2)})$ & 5.4893e-04 & 2.7113e-04 & 1.3558e-04 & 6.7914e-05 & 3.3957e-05 \\
  cbc($\boldsymbol{\gamma}^{(1)}$): $e(\boldsymbol{\gamma}^{(2)})$ & 5.4893e-04 & 2.7113e-04 & 1.3558e-04 & 6.7914e-05 & 3.3957e-05  \\
  cbc($\boldsymbol{\gamma}^{(2)}$): $e(\boldsymbol{\gamma}^{(2)})$ & 5.4882e-04 & 2.7113e-04 & 1.3558e-04 & 6.7892e-05 &  3.3954e-05 \\ \hline
  cbc$2$c: $e(\boldsymbol{\gamma}^{(3)})$ & 5.3522e-03 & 3.2454e-03 & 2.1131e-03 & 9.9811e-04 & 5.6878e-04 \\
  cbc($\boldsymbol{\gamma}^{(1)}$): $e(\boldsymbol{\gamma}^{(3)})$ & 5.9244e-03 & 2.9483e-03 & 1.9003e-03 & 1.0215e-03 & 5.9843e-04  \\
  cbc($\boldsymbol{\gamma}^{(2)}$): $e(\boldsymbol{\gamma}^{(3)})$ & 3.0073e-02 & 2.9957e-02 & 2.9922e-02 & 2.9914e-02 &  2.9906e-02 \\
  \hline\hline
  N & 8161 & 16319 & 32633 & 65267 & 130531  \\ \hline
  cbc$2$c: $e(\boldsymbol{\gamma}^{(1)})$  & 1.7664e-04 & 9.5576e-05 & 5.2562e-05 & 2.8616e-05 & 1.5674e-05 \\
  cbc($\boldsymbol{\gamma}^{(1)}$): $e(\boldsymbol{\gamma}^{(1)})$  & 1.7664e-04 & 9.5576e-05 & 5.2559e-05 & 2.8616e-05 & 1.5674e-05\\
  cbc($\boldsymbol{\gamma}^{(2)}$): $e(\boldsymbol{\gamma}^{(1)})$ & 4.9127e-03 & 4.9093e-03 & 4.3999e-03 & 4.3994e-03 & 4.3993e-03 \\ \hline
  cbc$2$c: $e(\boldsymbol{\gamma}^{(2)})$  & 1.7006e-05 & 8.5147e-06 & 4.2646e-06 & 2.1341e-06 & 1.0650e-06 \\
  cbc($\boldsymbol{\gamma}^{(1)}$): $e(\boldsymbol{\gamma}^{(2)})$ & 1.7006e-05 & 8.5147e-06 & 4.2646e-06 & 2.1341e-06 & 1.0650e-06  \\
  cbc($\boldsymbol{\gamma}^{(2)}$): $e(\boldsymbol{\gamma}^{(2)})$  & 1.7006e-05 & 8.5111e-06 &4.2631e-06  & 2.1351e-06 & 1.0683e-06 \\ \hline
  cbc$2$c: $e(\boldsymbol{\gamma}^{(3)})$ & 3.5384e-04 & 1.9131e-04 & 1.1179e-04 & 6.7310e-05 & 4.8831e-05 \\
  cbc($\boldsymbol{\gamma}^{(1)}$): $e(\boldsymbol{\gamma}^{(3)})$ & 3.5384e-04 & 1.9131e-04 & 1.0812e-04 & 6.7310e-05 & 4.2383e-05  \\
  cbc($\boldsymbol{\gamma}^{(2)}$): $e(\boldsymbol{\gamma}^{(3)})$ & 2.9898e-02 & 2.9898e-02 & 2.9891e-02 & 2.9891e-02 &  2.9891e-02 \\
  \hline\hline
  N & 261061 & 522127 & 1044257 &  2088511 & 4177051  \\ \hline
  cbc$2$c: $e(\boldsymbol{\gamma}^{(1)})$  & 8.6033e-06 & 4.7039e-06 & 2.6006e-06 & 1.4241e-06 & 8.0218e-07  \\
  cbc($\boldsymbol{\gamma}^{(1)}$): $e(\boldsymbol{\gamma}^{(1)})$  & 8.6019e-06 & 4.7038e-06 & 2.5996e-06 & 1.4242e-06 & 8.0024e-07 \\
  cbc($\boldsymbol{\gamma}^{(2)}$): $e(\boldsymbol{\gamma}^{(1)})$ & 3.6719e-03 & 3.0833e-03 & 3.3182e-03 & 3.0560e-03 &  3.0560e-03 \\ \hline
  cbc$2$c: $e(\boldsymbol{\gamma}^{(2)})$  & 5.3458e-07 & 2.7028e-07 & 1.1921e-07 & 0.0000e+00 & 0.0000e+00  \\
  cbc($\boldsymbol{\gamma}^{(1)}$): $e(\boldsymbol{\gamma}^{(2)})$ & 5.3458e-07 & 2.7028e-07 & 1.1921e-07 & 0.0000e+00 & 0.0000e+00  \\
  cbc($\boldsymbol{\gamma}^{(2)}$): $e(\boldsymbol{\gamma}^{(2)})$  & 5.3541e-07 & 2.5981e-07 & 1.3411e-07 & 1.1151e-07 & 1.6255e-07 \\ \hline
  cbc$2$c: $e(\boldsymbol{\gamma}^{(3)})$  & 2.0703e-05 & 1.2833e-05 & 7.2400e-06 & 5.4021e-06 & 2.7314e-06 \\
  cbc($\boldsymbol{\gamma}^{(1)}$): $e(\boldsymbol{\gamma}^{(3)})$ & 2.3378e-05 & 1.2218e-05 & 7.6749e-06 & 3.9504e-06 &  2.0520e-06 \\
  cbc($\boldsymbol{\gamma}^{(2)}$): $e(\boldsymbol{\gamma}^{(3)})$ & 2.9876e-02 & 2.9860e-02 & 2.9868e-02 & 2.9860e-02 &  2.9860e-02 
\end{tabular}
\caption{This table shows the square worst-case errors using the cbc$2$c construction based on the weights $\boldsymbol{\gamma}^{(1)}$ and $\boldsymbol{\gamma}^{(2)}$, the cbc construction based on the weights $\boldsymbol{\gamma}^{(1)}$ and the cbc construction based on the weights $\boldsymbol{\gamma}^{(2)}$. Here, $e(\boldsymbol{\gamma}^{(w)})$ stands for the worst-case error $e(\mathcal{H}_{\boldsymbol{\gamma}^{(w)}}, P_N(\boldsymbol{g}^\ast))$. We choose $s = 100$, product weights with $\widehat{\gamma}^{(1)}_j = 1$, $\widehat{\gamma}_j^{(2)}  = j^{-2}$, $\widehat{\gamma}_j^{(3)} = (s-j)^{-2}$, $c_1=c_2=2$;}\label{table2}
\end{table}

\begin{table}
\begin{tabular}{r||r|r|r|r|r}
  N & 251 & 509  & 1019 & 2039 & 4079  \\ \hline
  cbc$2$c: $e(\boldsymbol{\gamma}^{(1)})$ & 3.0814e-02& 1.8765e-02 & 1.1539e-02 &7.0582e-03 & 4.4014e-03 \\
  cbc($\boldsymbol{\gamma}^{(1)}$): $e(\boldsymbol{\gamma}^{(1)})$ & 2.9982e-02& 1.8448e-02& 1.1482e-02 &7.0420e-03 & 4.3849e-03 \\
  cbc($\boldsymbol{\gamma}^{(2)}$): $e(\boldsymbol{\gamma}^{(1)})$ & 8.5356e-02 & 7.9743e-02 & 7.7126e-02 & 6.7998e-02 & 6.7529e-02 \\ \hline
  cbc$2$c: $e(\boldsymbol{\gamma}^{(2)})$ & 2.4617e-03 & 1.2385e-03& 6.6254e-04 & 3.5517e-04 & 1.8239e-04 \\
  cbc($\boldsymbol{\gamma}^{(1)}$): $e(\boldsymbol{\gamma}^{(2)})$ & 2.4617e-03 & 1.2385e-03 & 6.6254e-04 & 3.5517e-04 &  1.8239e-04 \\
  cbc($\boldsymbol{\gamma}^{(2)}$): $e(\boldsymbol{\gamma}^{(2)})$ & 2.4416e-03 & 1.2423e-03 & 6.5820e-04 & 3.4793e-04 &  1.7957e-04  \\ \hline
  cbc$2$c: $e(\boldsymbol{\gamma}^{(3)})$ & 2.5477e+00 & 1.7828e+00 & 1.2581e+00 & 8.8750e-01 & 6.2621e-01 \\
  cbc($\boldsymbol{\gamma}^{(1)}$): $e(\boldsymbol{\gamma}^{(3)})$ & 2.5413e+00 & 1.7826e+00 & 1.2576e+00 & 8.8690e-01 & 6.2570e-01  \\
  cbc($\boldsymbol{\gamma}^{(2)}$): $e(\boldsymbol{\gamma}^{(3)})$ & 4.0922e+00 & 4.7043e+00 & 3.7287e+00 & 3.1589e+00 &  3.2877e+00 \\
  \hline\hline
  N & 8161 & 16319 & 32633 & 65267 & 130531  \\ \hline
  cbc$2$c: $e(\boldsymbol{\gamma}^{(1)})$  & 2.7229e-03& 1.6958e-03 & 1.0601e-03 & 6.6402e-04 & 4.1363e-04 \\
  cbc($\boldsymbol{\gamma}^{(1)}$): $e(\boldsymbol{\gamma}^{(1)})$  & 2.7208e-03 & 1.6957e-03 & 1.0587e-03 & 6.6282e-04 & 4.1368e-04 \\
  cbc($\boldsymbol{\gamma}^{(2)}$): $e(\boldsymbol{\gamma}^{(1)})$  & 6.1343e-02 & 5.5530e-02 & 5.3283e-02 & 5.2673e-02 & 5.0934e-02 \\ \hline
  cbc$2$c: $e(\boldsymbol{\gamma}^{(2)})$  & 9.5776e-05 & 4.9506e-05 & 2.6222e-05 & 1.3943e-05 & 7.3677e-06 \\
  cbc($\boldsymbol{\gamma}^{(1)}$): $e(\boldsymbol{\gamma}^{(2)})$ & 9.5776e-05 & 4.9506e-05 & 2.6222e-05 & 1.3943e-05 &  7.3677e-06 \\
  cbc($\boldsymbol{\gamma}^{(2)}$): $e(\boldsymbol{\gamma}^{(2)})$ & 9.4743e-05 & 4.9263e-05 & 2.5759e-05 & 1.3567e-05 & 7.2127e-06  \\ \hline
  cbc$2$c: $e(\boldsymbol{\gamma}^{(3)})$ & 4.4091e-01 & 3.1093e-01 & 2.1929e-01 & 1.5442e-01 & 1.0883e-01 \\
  cbc($\boldsymbol{\gamma}^{(1)}$): $e(\boldsymbol{\gamma}^{(3)})$ & 4.4099e-01 & 3.1075e-01 & 2.1886e-01 & 1.5431e-01 & 1.0875e-01  \\
  cbc($\boldsymbol{\gamma}^{(2)}$): $e(\boldsymbol{\gamma}^{(3)})$ & 3.1957e+00 & 2.6282e+00 & 2.7940e+00 & 2.5026e+00 & 2.3333e+00  \\
  \hline\hline
  N & 261061 & 522127 & 1044257 &  2088511 & 4177051  \\ \hline
  cbc$2$c: $e(\boldsymbol{\gamma}^{(1)})$  & 2.5887e-04 & 1.6206e-04 & 1.0112e-04 & 6.3253e-05 & 3.9582e-05  \\
  cbc($\boldsymbol{\gamma}^{(1)}$): $e(\boldsymbol{\gamma}^{(1)})$  & 2.5874e-04 & 1.6160e-04 & 1.0101e-04 & 6.3241e-05 & 3.9495e-05 \\
  cbc($\boldsymbol{\gamma}^{(2)}$): $e(\boldsymbol{\gamma}^{(1)})$  & 4.6470e-02 & 4.7789e-02 & 4.6729e-02 & 4.1838e-02 & 4.5181e-02  \\ \hline
  cbc$2$c: $e(\boldsymbol{\gamma}^{(2)})$  & 3.8690e-06 & 2.0481e-06 & 1.0884e-06 & 5.8400e-07 &  2.8115e-07 \\
  cbc($\boldsymbol{\gamma}^{(1)}$): $e(\boldsymbol{\gamma}^{(2)})$ & 3.8690e-06 & 2.0481e-06 & 1.0884e-06 & 5.8419e-07 & 2.6781e-07  \\
  cbc($\boldsymbol{\gamma}^{(2)}$): $e(\boldsymbol{\gamma}^{(2)})$ & 3.7567e-06 & 1.9802e-06 & 1.0503e-06 & 5.3995e-07 &  3.0465e-07\\ \hline
  cbc$2$c: $e(\boldsymbol{\gamma}^{(3)})$  & 7.6712e-02 & 5.3901e-02 & 3.7986e-02 & 2.6700e-02 & 1.8762e-02 \\
  cbc($\boldsymbol{\gamma}^{(1)}$): $e(\boldsymbol{\gamma}^{(3)})$ & 7.6574e-02 & 5.4043e-02 & 3.7979e-02 & 2.6664e-02 & 1.8764e-02  \\
  cbc($\boldsymbol{\gamma}^{(2)}$): $e(\boldsymbol{\gamma}^{(3)})$ & 2.2229e+00 & 2.1986e+00 & 2.3036e+00 & 2.0369e+00 & 2.1620e+00  
\end{tabular}
\caption{This table shows the square worst-case errors using the cbc$2$c construction based on the weights $\boldsymbol{\gamma}^{(1)}$ and $\boldsymbol{\gamma}^{(2)}$, the cbc construction based on the weights $\boldsymbol{\gamma}^{(1)}$ and the cbc construction based on the weights $\boldsymbol{\gamma}^{(2)}$. Here, $e(\boldsymbol{\gamma}^{(w)})$ stands for the worst-case error $e(\mathcal{H}_{\boldsymbol{\gamma}^{(w)}}, P_N(\boldsymbol{g}^\ast))$. We choose $s = 100$, product weights with $\widehat{\gamma}^{(1)}_j = j^{-1}$, $\widehat{\gamma}_j^{(2)}= 2^{-j}$, $\widehat{\gamma}_j^{(3)}$ is chosen randomly in $[0,1]$, $c_1=c_2=2$;}\label{table3}
\end{table}

\begin{table}
\begin{tabular}{r||r|r|r|r|r}
  N & 251 & 509  & 1019 & 2039 & 4079  \\ \hline
  cbc$2$c: $e(\boldsymbol{\gamma}^{(1)})$ & 2.4416e-03 & 1.2423e-03& 6.5820e-04& 3.4797e-04& 1.7957e-04 \\
  cbc($\boldsymbol{\gamma}^{(1)}$): $e(\boldsymbol{\gamma}^{(1)})$ & 2.4416e-03&1.2423e-03 &6.5820e-04 & 3.4793e-04& 1.7957e-04 \\
  cbc($\boldsymbol{\gamma}^{(2)}$): $e(\boldsymbol{\gamma}^{(1)})$ & 2.4832e-03 & 1.2705e-03 &  6.6492e-04& 3.6102e-04 &  1.8731e-04 \\ \hline
  cbc$2$c: $e(\boldsymbol{\gamma}^{(2)})$ & 3.4093e+00 & 2.3917e+00 & 1.6885e+00 & 1.1923e+00 & 8.4217e-01 \\
  cbc($\boldsymbol{\gamma}^{(1)}$): $e(\boldsymbol{\gamma}^{(2)})$ & 5.3545e+00 & 6.1955e+00 & 4.8321e+00 & 3.9970e+00 &  4.2535e+00 \\
  cbc($\boldsymbol{\gamma}^{(2)}$): $e(\boldsymbol{\gamma}^{(2)})$ & 3.4052e+00 & 2.3878e+00 &1.6849e+00  & 1.1889e+00 & 8.3874e-01  \\ \hline
  cbc$2$c: $e(\boldsymbol{\gamma}^{(3)})$ & 1.4044e+02 & 9.8623e+01 & 6.9703e+01 & 4.9275e+01 & 3.4838e+01 \\
  cbc($\boldsymbol{\gamma}^{(1)}$): $e(\boldsymbol{\gamma}^{(3)})$ & 1.4761e+02 & 1.2953e+02 & 8.4534e+01 & 6.2916e+01 &  5.5008e+01 \\
  cbc($\boldsymbol{\gamma}^{(2)}$): $e(\boldsymbol{\gamma}^{(3)})$ & 1.4044e+02 & 9.8623e+01 & 6.9702e+01 & 4.9275e+01 &  3.4838e+01 \\
  \hline\hline
  N & 8161 & 16319 & 32633 & 65267 & 130531  \\ \hline
  cbc$2$c: $e(\boldsymbol{\gamma}^{(1)})$  & 9.4743e-05 & 4.9263e-05 & 2.5759e-05 & 1.3568e-05 & 7.2118e-06 \\
  cbc($\boldsymbol{\gamma}^{(1)}$): $e(\boldsymbol{\gamma}^{(1)})$  & 9.4743e-05 & 4.9263e-05 & 2.5759e-05 & 1.3567e-05 & 7.2127e-06 \\
  cbc($\boldsymbol{\gamma}^{(2)}$): $e(\boldsymbol{\gamma}^{(1)})$ & 9.6866e-05 & 5.1703e-05 & 2.6248e-05 & 1.4507e-05 & 7.7439e-06 \\ \hline
  cbc$2$c: $e(\boldsymbol{\gamma}^{(2)})$  & 5.9466e-01 & 4.1992e-01 & 2.9647e-01 & 2.0921e-01 & 1.4753e-01 \\
  cbc($\boldsymbol{\gamma}^{(1)}$): $e(\boldsymbol{\gamma}^{(2)})$ & 3.6458e+00 & 3.2764e+00 & 3.1991e+00 & 3.2104e+00 &  3.3114e+00 \\
  cbc($\boldsymbol{\gamma}^{(2)}$): $e(\boldsymbol{\gamma}^{(2)})$  & 5.9162e-01 & 4.1731e-01 & 2.9421e-01 & 2.0732e-01 & 1.4605e-01 \\ \hline
  cbc$2$c: $e(\boldsymbol{\gamma}^{(3)})$ & 2.4630e+01 & 1.7417e+01 & 1.2317e+01 & 8.7092e+00 & 6.1583e+00 \\
  cbc($\boldsymbol{\gamma}^{(1)}$): $e(\boldsymbol{\gamma}^{(3)})$ & 4.1063e+01 & 3.1297e+01 & 2.7350e+01 & 2.5229e+01 & 2.2864e+01  \\
  cbc($\boldsymbol{\gamma}^{(2)}$): $e(\boldsymbol{\gamma}^{(3)})$ & 2.4629e+01 & 1.7417e+01 & 1.2316e+01 & 8.7088e+00 & 6.1580e+00  \\
  \hline\hline
  N & 261061 & 522127 & 1044257 &  2088511 & 4177051  \\ \hline
  cbc$2$c: $e(\boldsymbol{\gamma}^{(1)})$  & 3.7566e-06 & 1.9798e-06 & 1.0551e-06 & 5.4057e-07 & 2.9352e-07  \\
  cbc($\boldsymbol{\gamma}^{(1)}$): $e(\boldsymbol{\gamma}^{(1)})$  & 3.7567e-06 & 1.9802e-06 & 1.0503e-06 & 5.3995e-07 & 3.0465e-07 \\
  cbc($\boldsymbol{\gamma}^{(2)}$): $e(\boldsymbol{\gamma}^{(1)})$ & 4.3237e-06 & 2.1728e-06 & 1.2567e-06 & 6.4075e-07 & 3.1434e-07  \\ \hline
  cbc$2$c: $e(\boldsymbol{\gamma}^{(2)})$  & 1.0412e-01 & 7.3434e-02 & 5.1821e-02 & 3.6553e-02 &  2.5707e-02 \\
  cbc($\boldsymbol{\gamma}^{(1)}$): $e(\boldsymbol{\gamma}^{(2)})$ & 3.0074e+00 & 2.9431e+00 & 3.1018e+00 & 2.7223e+00 &  2.8919e+00 \\
  cbc($\boldsymbol{\gamma}^{(2)}$): $e(\boldsymbol{\gamma}^{(2)})$  & 1.0284e-01 & 7.2401e-02 & 5.0932e-02 & 3.5823e-02 & 2.5188e-02 \\ \hline
  cbc$2$c: $e(\boldsymbol{\gamma}^{(3)})$  & 4.3546e+00 & 3.0791e+00 & 2.1772e+00 & 1.5395e+00 & 1.0886e+00 \\
  cbc($\boldsymbol{\gamma}^{(1)}$): $e(\boldsymbol{\gamma}^{(3)})$ & 1.8153e+01 & 1.9073e+01 & 1.8373e+01 & 1.4183e+01 & 1.6737e+01  \\
  cbc($\boldsymbol{\gamma}^{(2)}$): $e(\boldsymbol{\gamma}^{(3)})$ & 4.3543e+00 & 3.0788e+00 & 2.1770e+00 & 1.5393e+00 & 1.0884e+00  \\ \hline\hline
\end{tabular}
\caption{This table shows the square worst-case errors using the cbc$2$c construction based on the weights $\boldsymbol{\gamma}^{(1)}$ and $\boldsymbol{\gamma}^{(2)}$, the cbc construction based on the weights $\boldsymbol{\gamma}^{(1)}$ and the cbc construction based on the weights $\boldsymbol{\gamma}^{(2)}$. Here, $e(\boldsymbol{\gamma}^{(w)})$ stands for the worst-case error $e(\mathcal{H}_{\boldsymbol{\gamma}^{(w)}}, P_N(\boldsymbol{g}^\ast))$. We choose $s = 100$, product weights with $\widehat{\gamma}^{(1)}_j = 2^{-j}$, $\widehat{\gamma}_j^{(2)}$ is chosen randomly, $\widehat{\gamma}_j^{(3)} = 1$, $c_1=c_2=2$;}\label{table4}
\end{table}

\end{document}